\definecolor{red}{rgb}{1,0,0}
\definecolor{blue}{rgb}{.2,.2,.8}
\def\a{\alpha}
\def\b{\beta}
\def\l{\lambda}
\def\m{\mu}
\def\D{\mathcal D}
\def\P{\mathcal P} 
\def\Q{\mathcal Q}
\def\vp{\varphi}
\def\mex{\text{mex}}
\def\e{\  }
\def\smex{\sigma \, \mex}
\newtheorem{theorem}{Theorem}[section]
\newtheorem{corollary}[theorem]{Corollary}
\newtheorem{proposition}[theorem]{Proposition}
\newtheorem{conjecture}{Conjecture}
\newtheorem{lemma}[theorem]{Lemma}
\theoremstyle{definition}
\newtheorem{definition}{Definition}
\newtheorem{example}{Example}[section]
\newcommand{\ds}{\displaystyle}
\begin{document}

	\title{Combinatorial Proof of the Minimal Excludant Theorem}
	\author{Cristina Ballantine%\thanks{This work was partially supported by a grant from the Simons Foundation (\#245997 to Cristina Ballantine).}
	\\
	%\ and Mircea Merca\footnote{mircea.merca@profinfo.edu.ro}
	% \\ 
	%\small Department of Mathematics,
	%\small University of Craiova\\
	%\small Craiova, 200585 Romania}
	%\date{}
	\footnotesize Department of Mathematics and Computer Science\\
	\footnotesize College of The Holy Cross\\
	\footnotesize Worcester, MA 01610, USA \\
	\footnotesize cballant@holycross.edu
	\and Mircea Merca%\samethanks
	\\ 
	\footnotesize Academy of Romanian Scientists\\
	\footnotesize Ilfov 3, Sector 5, Bucharest, Romania\\
	\footnotesize mircea.merca@profinfo.edu.ro
}
	\date{}
	\maketitle

%\ccom{} \mcom{}

\begin{abstract}
	
The minimal excludant  of a partition $\l$, $\mex(\l)$, is  the smallest positive integer that is not a part of  $\l$.  For a positive integer $n$,  $ \sigma\,  \mex(n)$ denotes the sum of the minimal excludants of all partitions of $n$. Recently, Andrews and Newman  obtained a new combinatorial interpretations for $\sigma\,  \mex(n)$. They showed, using generating functions,  that $\sigma\, \mex(n)$ equals the number  of partitions of $n$ into distinct parts using two colors. 
 In this paper, we provide a purely combinatorial proof of this result and new properties of the function $\sigma\,  \mex(n)$. We   generalize this combinatorial  interpretation to $\sigma_r\,  \mex(n)$, the sum of least $r$-gaps in all partitions of $n$. The least $r$-gap of a partition $\l$ is the smallest positive integer that does not appear at least $r$ times as a part of $\l$.
\\
\\
{\bf Keywords:} Minimal excludant, MEX, least gap in partition, partitions, $2$-color partitions
\\
\\
{\bf MSC 2010:}  11A63, 11P81, 05A19 
\end{abstract}

\section{Introduction}

The minimal excludant or $\mex$-function on a set $S$ of positive integers is the least positive integer not in $S$. The history of this notion goes back to at least the 1930s when it was  applied  to  combinatorial game theory \cite{S35, G39}. 

Recently, Andrews and Newman \cite{AN19} considered the $\mex$-function applied to integer partitions. 
They defined the minimal excludant of a partition $\l$, $\mex(\l)$, to be the smallest positive integer that is not a part of  $\l$. 
In addition, for each positive integer $n$, they defined 
$$ \sigma\,  \mex(n) := \sum_{\l\in \P(n)} \mex(\l),$$
where $\P(n)$ is the set of all partitions of $n$.  Elsewhere in the literature, the minimal excludant  of a partition  $\l$  is referred to as the least gap or smallest gap of $\l$.  An exact and asymptotic formula for $\sigma\,  \mex(n)$ is given in \cite{GK06}.   In \cite{BM19}, where $\mex(\l)$ is denoted by $g_1(\l)$  and $\sigma\,  \mex(n)$ is denoted by $S_1(n)$, the authors study a generalization of $\sigma\,  \mex(n)$ and its connection to polygonal numbers. 

Let $\D_2(n)$ be the set of partitions of $n$ into distinct parts using two colors and let $D_2(n)=|\D_2(n)|$. For ease of notation, we denote the colors of the parts of partitions in $\D_2(n)$ by $0$ and $1$.  In \cite{AN19}, the authors give two proofs of the following theorem. 

\begin{theorem} \label{T1} Given an integer $n \geqslant 0$, we have $$\sigma\, \mathrm{mex}(n)=D_2(n).$$
\end{theorem}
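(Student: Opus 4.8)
The plan is to move $\smex(n)$ through a short chain of bijections --- first to a sum of ordinary partition numbers, then to a family of partition pairs, and finally to $\D_2(n)$ --- the last step being the heart of the matter. For a partition $\l$ one has $\mex(\l)=\#\{\,i\ge 1:\ \{1,2,\dots,i-1\}\subseteq\l\,\}$, since $\mex(\l)\ge i$ exactly when every integer $1,\dots,i-1$ occurs in $\l$. Summing over $\l\in\P(n)$ and interchanging the order of summation,
\[
\smex(n)=\sum_{i\ge 1}\#\bigl\{\,\l\in\P(n):\{1,\dots,i-1\}\subseteq\l\,\bigr\}.
\]
For each fixed $i$, deleting one copy of each of $1,\dots,i-1$ is a bijection from the $i$-th set onto $\P\!\bigl(n-\tbinom{i}{2}\bigr)$, because $1+2+\cdots+(i-1)=\tbinom{i}{2}$. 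With $t_j:=\tbinom{j+1}{2}$ the $j$-th triangular number and $j=i-1$, this gives $\smex(n)=\sum_{j\ge 0}p(n-t_j)$, so the theorem becomes the identity $D_2(n)=\sum_{j\ge 0}p(n-t_j)$ --- classically true by Gauss's triangular-number identity, but requiring a bijection for a combinatorial proof.

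\textbf{Step 2 (a partition-pair model).} I would recast $\sum_{j\ge0}p(n-t_j)$ in a form directly comparable with $\D_2(n)$. Given $j\ge 0$ and a partition $\m$ of $n-t_j$, set $\gamma_i=\m_i+(j-i+1)$ for $1\le i\le j$ and $\eta=(\m_{j+1},\m_{j+2},\dots)$. Then $\gamma$ has distinct parts (since $\gamma_i-\gamma_{i+1}=(\m_i-\m_{i+1})+1\ge 1$), $\eta$ is a partition with $\max(\eta)=\m_{j+1}<\m_j+1=\min(\gamma)$, and $|\gamma|+|\eta|=|\m|+t_j=n$; as $j=\ell(\gamma)$, the map is reversible, whence
\[
\smex(n)=\#\bigl\{\,(\gamma,\eta):\ \gamma\text{ has distinct parts},\ \eta\text{ a partition},\ \min(\gamma)>\max(\eta),\ |\gamma|+|\eta|=n\,\bigr\}.
\]
On the other side, identify $\D_2(n)$ with the set of ordered pairs $(\a,\b)$ of partitions into distinct parts with $|\a|+|\b|=n$, where $\a$ collects the parts colored $0$ and $\b$ those colored $1$.

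\textbf{Step 3 (the bijection --- the main obstacle).} What remains is a weight-preserving bijection from the pairs $(\gamma,\eta)$ of Step 2 to the pairs $(\a,\b)$. The structural tension is that in $(\gamma,\eta)$ the entry $\eta$ is an arbitrary partition but is forced to have small parts, whereas in $(\a,\b)$ the entry $\b$ is size-unrestricted but must have distinct parts. My approach would be to reduce both sides to a common normal form using the staircase decomposition (a partition into $k$ distinct parts $\leftrightarrow$ a pair $(k,\pi)$ with $\pi$ a partition with at most $k$ parts, obtained by subtracting $(k,k-1,\dots,1)$) applied to $\a$ and $\b$, together with a Frobenius-symbol --- or $2$-core / $2$-quotient --- description of the partition assembled from $\gamma$ and $\eta$; alternatively one may first apply Glaisher's distinct$\leftrightarrow$odd correspondence to each color, turning $\D_2(n)$ into the count of two-colored partitions into odd parts, and match the ``doubling'' coming from two colors against a single staircase. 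The real difficulty, and the step I expect to be the main obstacle, is making this map simultaneously well defined and invertible: subtracting a staircase from a partition into distinct parts is not unique, so a canonical choice has to be forced --- taking it maximal, or passing through the $2$-core (which is itself a staircase $(k,k-1,\dots,1)$) --- and the record of which color absorbs the staircase must be shown consistent under the inverse. I anticipate this bookkeeping, rather than any single clever move, to be where the work concentrates.

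\textbf{Step 4 (the generalization).} The same scheme applies to $\sigma_r\,\mex(n)$, the sum over $\l\in\P(n)$ of the least $r$-gap of $\l$: since this $r$-gap equals $\#\{\,i\ge1:\text{each of }1,\dots,i-1\text{ occurs at least }r\text{ times in }\l\,\}$, deleting $r$ copies of each small part gives $\sigma_r\,\mex(n)=\sum_{j\ge 0}p(n-r\,t_j)$, and one adapts the bijection of Step 3 with $r$ copies of the staircase in place of one.
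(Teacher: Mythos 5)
Your Steps 1 and 2 are correct and coincide with the paper's setup: the reduction $\sigma\,\mathrm{mex}(n)=\sum_{j\geqslant 0}p\big(n-j(j+1)/2\big)$ is exactly identity \eqref{smex as sum}, and your pair $(\gamma,\eta)$ is just the Ferrers diagram of $\mu\in\P\big(n-j(j+1)/2\big)$ with the rotated staircase $\eta(j)$ stacked on top, which is the object the paper works with. The problem is Step 3: this is the entire content of the theorem, and you do not prove it. You list several candidate strategies (maximal staircase subtraction, Frobenius symbols, $2$-cores/$2$-quotients, Glaisher) and then explicitly state that making any of them well defined and invertible is "the main obstacle" that you have not overcome. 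A proof proposal that stops at "here is where the work concentrates" has a genuine gap precisely there.

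For the record, the missing idea is Sylvester's (and Wright's) dissection from the bijective proof of Jacobi's triple product identity. One cuts the combined diagram (staircase of height $k$ on top of $\lambda$) along its \emph{staircase profile}: the column lengths to the left of the profile form a distinct-part partition $\alpha$, the row lengths to the right form a distinct-part partition $\beta$, and one colors $\alpha$ with $k\bmod 2$ and $\beta$ with $k+1\bmod 2$. Your worry about "which color absorbs the staircase" being recoverable is resolved by the inequality $k\leqslant \ell(\alpha)-\ell(\beta)\leqslant k+1$: the signed difference $r=\ell_0(\mu)-\ell_1(\mu)$ (or $\ell_1-\ell_0$) determines $k$ uniquely via $k=r+\frac{(-1)^r-1}{2}$ or $k=r-\frac{(-1)^r+1}{2}$, so the inverse is canonical with no choices to be forced. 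Note also that your closest guess (the Frobenius-symbol description) is essentially this cut, but none of your alternatives as stated yields the map without this quantitative control on $\ell(\alpha)-\ell(\beta)$. Your Step 4 is likewise only an announcement: the paper's generalization (Theorem \ref{TL}) does not simply use "$r$ copies of the staircase" but splits $\lambda$ into parts divisible by $r$ and not, routes the latter through Glaisher's bijection, and rebuilds the two colors by multiplying color-$0$ parts by $r$ and repeating color-$1$ parts $r$ times.
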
 They also determine the parity of the $\smex$ function. 

\begin{lemma} \label{L1} $\smex(n)$ is odd if and only if $n=j(3j\pm 1)$ for some non-negative integer $j$. 
\end{lemma}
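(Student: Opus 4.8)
The plan is to use Theorem~\ref{T1} to replace the parity of $\smex(n)$ by that of $D_2(n)$, and then to evaluate the latter mod~$2$ with two successive involutions. First recall that a partition in $\D_2(n)$ is completely determined by its set of parts of color $0$ and its set of parts of color $1$; since parts of a fixed color are distinct, this identifies $\D_2(n)$ with the set of ordered pairs $(\a,\b)$ of partitions into distinct parts with $|\a|+|\b|=n$. Interchanging the two colors, $(\a,\b)\mapsto(\b,\a)$, is an involution on this set; it is fixed-point free when $n$ is odd, and when $n=2m$ its fixed points are exactly the pairs $(\a,\a)$ with $\a$ a partition of $m$ into distinct parts. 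Writing $Q(m)$ for the number of such partitions, this gives
\[
D_2(n)\equiv
\begin{cases}
Q(m), & \text{if } n=2m,\\
0, & \text{if } n \text{ is odd},
\end{cases}
\pmod{2}.
\]

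Everything therefore reduces to the parity of $Q(m)$, and this is the step that carries the real content. The claim is that $Q(m)$ is odd precisely when $m$ is a generalized pentagonal number $\tfrac{j(3j-1)}{2}$ with $j\in\mathbb Z$. I would establish this by Franklin's classical involution on the partitions of $m$ into distinct parts, built from the exchange between the smallest part and the top staircase of consecutive parts: it is fixed-point free except that, when $m=\tfrac{j(3j\pm1)}{2}$, it leaves unpaired a single staircase partition (namely $\sigma,\sigma+1,\dots,2\sigma-1$ when $j=\sigma>0$, the partition $\sigma+1,\dots,2\sigma$ when $j=-\sigma<0$, and the empty partition when $m=0$). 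Hence $Q(m)$ is odd for these $m$ and even for all others. Equivalently and more briefly, one may quote Euler's pentagonal number theorem together with the congruence $\prod_{k\geq1}(1+q^k)\equiv\prod_{k\geq1}(1-q^k)\pmod 2$.

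Putting the pieces together completes the proof: by Theorem~\ref{T1}, $\smex(n)=D_2(n)$ is odd if and only if $n$ is even, say $n=2m$, with $m=\tfrac{j(3j-1)}{2}$ for some $j\in\mathbb Z$, i.e.\ if and only if $n=j(3j-1)$ for some $j\in\mathbb Z$. Separating the cases $j>0$, $j<0$, $j=0$ shows that this is exactly the condition $n=j(3j\pm1)$ for some non-negative integer $j$, which is the statement of the lemma. The main obstacle is the parity of $Q(m)$; the reduction of $D_2$ to $Q$ is a one-line color-swap involution, and the last step is merely a reindexing of the generalized pentagonal numbers.
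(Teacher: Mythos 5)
Your proof is correct and follows essentially the same route as the paper: reduce to the parity of $D_2(n)$ via Theorem~\ref{T1}, apply the color-swap involution to get $D_2(n)\equiv q(n/2)\pmod 2$ for even $n$ (and $0$ for odd $n$), and then invoke Franklin's involution to locate the odd values of $q(m)$ at the generalized pentagonal numbers. Your version just spells out the fixed points of Franklin's involution and the final reindexing more explicitly than the paper does.
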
 We note that this parity result was also established in \cite[Corollary 1.6]{BM19}. Andrews and Newman write that ``it would be of great interest to have  a bijective proof of Theorem \ref{T1}." They also ask for a combinatorial proof of Lemma \ref{L1}.
In section \ref{combAN}  we provide these desired proofs.  
In the proof of Theorem \ref{T1}, we make use of the fact  that \begin{equation}\label{smex as sum} \sigma \, \mex(n)=\ds\sum_{k\geqslant 0} p(n-k(k+1)/2),\end{equation} where, as usual, $p(n)$ denotes the number of partitions of $n$.
A combinatorial  proof of  \eqref{smex as sum} is given  in \cite[Theorem 1.1]{BM19} .    The same argument is also described in the second proof of \cite[Theorem 1.1]{AN19}. We note the result proven in \cite{BM19} is a  generalization of \eqref{smex as sum} to the sum of $r$-gaps in all partitions of $n$. The $r$-gap of a partition $\l$ is the least positive integer that does not appear $r$ times as a part of $\l$.

In \cite{Andrews12}, Andrews and Merca considered a restricted  $\mex$-function and defined $M_k(n)$ to be the number of partitions of $n$ in which $k$ is the least positive integer that is not a part and there are more parts $>k$ than there are parts $<k$. 

When $k=1$, $M_1(0)=0$ and, if $n>0$, $M_1(n)$ is the number of partitions of $n$ that do not contain $1$ as a part. Thus, if $n>0$, we have $M_1(n)=p(n)-p(n-1)$. Then, from $\eqref{smex as sum}$, we obtain \begin{equation}\label{T2k=1} \sigma\,  \mathrm{mex}(n) -\sigma\,  \mathrm{mex}(n-1) - \delta(n) 
= \sum_{j=0}^\infty M_1\big(n-j(j+1)/2\big),\end{equation} where $\delta$ is the characteristic function of the set of triangular numbers, i.e.,
	$$
	\delta(n) = \begin{cases}
	1,& \text{if $n=m(m+1)/2,\ m\in\mathbb{Z}$,}\\
	0,& \text{otherwise.}
	\end{cases} 
	$$

In section \ref{pfT2}, we prove the following generalization of \eqref{T2k=1}.

\begin{theorem}\label{T2}
	Let $k$ be a positive integer. Given an integer $n\geqslant 0$, we have
	\begin{align*}
	& (-1)^{k-1} \left( \sum_{j=-(k-1)}^k (-1)^j \sigma\, \mathrm{mex}\big(n-j(3j-1)/2\big) - \delta(n) \right) \\
	& = \sum_{j=0}^\infty M_k\big(n-j(j+1)/2\big).
	\end{align*}
\end{theorem}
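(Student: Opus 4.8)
The statement relates an alternating sum of $\sigma\,\mex$ values (shifted by pentagonal-type numbers $j(3j-1)/2$) to a sum of $M_k$ values (shifted by triangular numbers). The natural strategy is to combine three ingredients: the formula \eqref{smex as sum} expressing $\sigma\,\mex(n)$ as $\sum_{i\geq 0} p(n-i(i+1)/2)$; the known result of Andrews–Merca on the restricted $\mex$-function, which states (in the notation of \cite{Andrews12}) that
$$\sum_{j=-(k-1)}^{k}(-1)^j p\!\left(n-\tfrac{j(3j-1)}{2}\right) = (-1)^{k-1} M_k(n);$$
and the triangular-number convolution that ties these together. So the plan is essentially an exchange-of-summation argument.

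First I would substitute \eqref{smex as sum} into the left-hand side, replacing each $\sigma\,\mex(n-j(3j-1)/2)$ by $\sum_{i\geq 0} p\big(n - j(3j-1)/2 - i(i+1)/2\big)$. Interchanging the (finite in $j$, effectively finite in $i$) sums, the left-hand side becomes
$$(-1)^{k-1}\sum_{i\geq 0}\left(\sum_{j=-(k-1)}^{k}(-1)^j\, p\!\left(n-\tfrac{i(i+1)}{2}-\tfrac{j(3j-1)}{2}\right)\right) - (-1)^{k-1}\delta(n).$$
Second, I would apply the Andrews–Merca identity to the inner sum over $j$ with argument $m = n - i(i+1)/2$, turning it into $(-1)^{k-1}M_k\big(n - i(i+1)/2\big)$, valid for $m\geq 0$; the terms with $m<0$ vanish. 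This collapses the double sum to $\sum_{i\geq 0} M_k\big(n-i(i+1)/2\big)$, matching the right-hand side — provided the stray $-(-1)^{k-1}\delta(n)$ term is exactly accounted for.

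The main obstacle is the boundary bookkeeping: the Andrews–Merca identity as usually stated holds for $n\geq 1$ (or needs a correction term at $n=0$, since $p(0)=1$ but $M_k(0)=0$), so the term in the $i$-sum where $n-i(i+1)/2 = 0$ — i.e.\ when $n$ is itself triangular — produces an extra $(-1)^{k-1}$ that must be cancelled by the $-(-1)^{k-1}\delta(n)$ term. I would handle this by stating the Andrews–Merca identity in the precise form $\sum_{j=-(k-1)}^{k}(-1)^j p(m - j(3j-1)/2) = (-1)^{k-1}\big(M_k(m) + \delta_{m,0}\big)$ for $m\geq 0$ (equivalently, tracking that the $j=0$ term contributes $p(0)=1$ when $m=0$), so that summing over $i$ gives $\sum_{i\geq 0}M_k(n-i(i+1)/2) + (-1)^{k-1}\delta(n)$, and the $\delta(n)$ on the left precisely absorbs it. Finally I would check the special case $k=1$ reduces to \eqref{T2k=1}, using $M_1(n) = p(n)-p(n-1)$ for $n>0$, as a consistency verification. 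A purely combinatorial rendering of the same argument, replacing the generating-function identity by the bijective content of \eqref{smex as sum} and the combinatorial meaning of $M_k$, would also be possible, but the arithmetic version above is the cleanest route.
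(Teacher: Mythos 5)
Your strategy is the same as the paper's analytic proof: multiply the Andrews--Merca truncated pentagonal number theorem by $\sum_{i\geqslant 0}q^{i(i+1)/2}$, or equivalently substitute \eqref{smex as sum} and exchange summations, then apply the truncated identity to the inner sum. The outline is sound, and the interchange and the vanishing of the terms with negative argument are unproblematic. However, the key lemma as you display it is wrong, and the error is exactly at the boundary point you were trying to handle. Extracting the coefficient of $q^m$ from the truncated pentagonal number theorem gives
\begin{equation*}
\sum_{j=-(k-1)}^{k}(-1)^j\, p\!\left(m-\tfrac{j(3j-1)}{2}\right) \;=\; (-1)^{k-1}M_k(m) \;+\; \delta_{m,0},
\end{equation*}
i.e.\ the correction term $\delta_{m,0}$ does \emph{not} carry the factor $(-1)^{k-1}$: the constant $(-1)^{k-1}$ on the right of the truncated identity becomes $+1$ after the whole identity is divided by $(-1)^{k-1}$. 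Your displayed form $(-1)^{k-1}\bigl(M_k(m)+\delta_{m,0}\bigr)$ evaluates to $-1$ at $m=0$ for even $k$, contradicting your own parenthetical check that the $j=0$ term contributes $p(0)=1$.

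This matters for the conclusion. With the correct form, summing over $i$ yields
$\sum_{j}(-1)^j\sigma\,\mex\bigl(n-j(3j-1)/2\bigr)=(-1)^{k-1}\sum_{i\geqslant 0}M_k\bigl(n-i(i+1)/2\bigr)+\delta(n)$,
and the $-\delta(n)$ in the theorem cancels the generated $+\delta(n)$ for every $k$, after which the outer $(-1)^{k-1}$ squares away. With your form, the generated term is $(-1)^{k-1}\delta(n)$, and the cancellation against $-\delta(n)$ fails whenever $k$ is even and $n$ is triangular, leaving a spurious $\bigl(1-(-1)^{k-1}\bigr)\delta(n)$. So as written the argument proves the theorem only for odd $k$; the fix is one line, but the boundary bookkeeping --- which you correctly identified as the main obstacle --- is the one place where your proof as stated breaks. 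Note also that the paper additionally gives a genuinely combinatorial proof, reducing the statement to \eqref{T2k=1} together with an identity proved bijectively by Yee; your argument corresponds only to the analytic route.
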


As a corollary of this theorem we obtain the following infinite family of linear inequalities involving $\sigma\, \mathrm{mex}$. 

\begin{corollary}
	Let $k$ be a positive integer. Given an integer $n\geqslant 0$, we have
	\begin{align*}
		& (-1)^{k-1} \left( \sum_{j=-(k-1)}^k (-1)^j \sigma\, \mathrm{mex}\big(n-j(3j-1)/2\big) - \delta(n) \right) \geqslant 0,
	\end{align*}	
	with strict inequality if $n\geqslant k(3k+1)/2$.
\end{corollary}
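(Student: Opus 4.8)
The plan is to obtain the corollary as an immediate consequence of Theorem~\ref{T2}. By that theorem,
\[
(-1)^{k-1}\left(\sum_{j=-(k-1)}^{k}(-1)^{j}\,\sigma\,\mathrm{mex}\big(n-j(3j-1)/2\big)-\delta(n)\right)=\sum_{j=0}^{\infty}M_k\big(n-j(j+1)/2\big),
\]
where summands with negative argument are taken to be $0$. Since $M_k(m)$ is, by definition, the cardinality of a set of partitions of $m$, every term on the right is a non-negative integer; hence the sum is $\geqslant 0$, which is exactly the asserted inequality. No further argument is needed for the non-strict case.

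For the strict inequality under the hypothesis $n\geqslant k(3k+1)/2$, it suffices to exhibit a single strictly positive term in $\sum_{j\geqslant 0}M_k\big(n-j(j+1)/2\big)$, and the natural candidate is the $j=0$ term. Thus the task reduces to proving $M_k(n)\geqslant 1$ whenever $n\geqslant k(3k+1)/2$. I would do this by writing down one partition of $n$ in the set enumerated by $M_k(n)$: namely, the partition whose parts are $1,2,\dots,k-1$ (each with multiplicity one), together with $k-1$ copies of $k+1$ and one further part equal to $(k+1)+\big(n-k(3k+1)/2\big)$. A direct check shows that its parts sum to $n$, that its minimal excludant is $k$ (all of $1,\dots,k-1$ are present, $k$ is not, and every remaining part is $\geqslant k+1$), and that it has exactly $k$ parts greater than $k$ against only $k-1$ parts less than $k$; hence it is counted by $M_k(n)$. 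The degenerate case $k=1$ gives the partition $(n)$ with $n\geqslant 2$, which is consistent. Therefore $M_k(n)\geqslant 1$, the sum is $\geqslant 1$, and the strict inequality follows.

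The only point requiring a little care is the extremal bookkeeping in this construction: one must verify simultaneously the minimal-excludant condition and the ``more large parts than small parts'' condition, and recognise that $k(3k+1)/2=\frac{k(k-1)}{2}+k(k+1)$ is precisely the smallest $n$ for which such a partition can exist --- the $k-1$ smallest admissible parts below $k$ contribute $\frac{k(k-1)}{2}$, and the $k$ smallest admissible parts above $k$ contribute $k(k+1)$. This is a short extremal computation rather than a genuine difficulty, and I expect the entire proof to rest on nothing beyond Theorem~\ref{T2} and this explicit family of partitions.
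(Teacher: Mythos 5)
Your proposal is correct and follows exactly the route the paper intends: the corollary is stated as an immediate consequence of Theorem \ref{T2}, with non-negativity coming from the fact that each $M_k(m)$ counts partitions, and strict positivity for $n\geqslant k(3k+1)/2$ coming from $M_k(n)\geqslant 1$ in that range. Your explicit witness partition $1+2+\cdots+(k-1)+(k+1)\cdot(k-1)+\big((k+1)+n-k(3k+1)/2\big)$ checks out (sum, minimal excludant, and the part-count condition all verify), and it supplies a detail the paper leaves implicit.
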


In section \ref{pfT2} we also give a combinatorial interpretation for $$\ds  \sum_{j=0}^\infty M_k\big(n-j(j+1)/2\big)$$ in terms of the number of partitions into distinct parts using three colors and satisfying certain conditions. 

In sections \ref{op} and \ref{dist}, we introduce connections to certain subsets of overpartitions and partitions with distinct parts, respectively.

\section{Combinatorial Proofs of Theorem \ref{T1} and Lemma \ref{L1}} \label{combAN}

\subsection{Bijective Proof of Theorem \ref{T1}}\label{s1}

To prove the theorem, we adapt Sylvester's bijective proof of Jacobi's triple product identity \cite{S1882} which was later rediscovered by Wright \cite{W65}. For the interested reader, it is probably easier to follow Wright's short article for the description of the bijection. %\ccom{articolul din bibliografie este de Sylvester si Franklin, dar pe unde am mai citit lumea se refera la bijectia lui Sylvester.}

Given a partition $\l$ in $\D_2(n)$, let $\l^{(j)}$, $j=0,1$, be the (uncolored) partition whose parts are the parts of color $j$ in $\l$. Then,  $\l^{(1)}$ and  $\l^{(2)}$ are partitions into distinct parts.

\begin{example} If $\l=4_1+3_0+3_1+2_0+1_0 \in \D_2(13)$, then $\l^{(0)}=3+2+1$ and $\l^{(1)}=4+3$.
\end{example}

Denote by $\eta(k)$  the staircase partition $\eta(k)=k+(k-1)+(k-2)+\cdots +3+2+1$.  If $k=0$ we define $\eta(k)=\emptyset$. % Denote by $\A_k(n)$ the set of all partition of $n-k(k+1)/2$. 
For any partition $\l$ we denote by $\ell(\l)$ the number of parts in $\l$. 

\begin{definition} Given diagram of left justified rows of boxes (not necessarily the Ferrers diagram of a partition), the \textit{staircase profile} of the diagram is a zig-zag line starting in the upper left corner of the diagram with a right step and continuing in alternating down and right steps until the end of a row of the diagram is reached. 

\end{definition}

\begin{example} The staircase profile of the diagram $$\yng(3,5,8,4,4,2,2,1)$$ is 
\begin{center}\tiny{\begin{tikzpicture}[inner sep=0in,outer sep=0in]\node (n) {\begin{varwidth}{0cm}{\ydiagram{3,5,8,4,4,2,2,1}
}\end{varwidth}};
\draw[ultra thick] (0,1.47)--(0.37,1.47)-- (0.37,1.1)--(.73,1.1)--(.73,.73)--(1.1,.73)--(1.1,.36)--(1.48,.36) ;
\end{tikzpicture}}\end{center}

\end{example}

Given a Ferrers diagram (with boxes of unit length) of a partition $\l$ into distinct parts, the \textit{shifted Ferrers diagram} of $\l$ is the diagram in which row $i$ is shifted $i-1$ units to the right.

We create a map $$\vp:\bigcup_{k \geqslant 0} \P(n-k(k+1)/2)\to\D_2(n)$$ as follows. 
Start with $\l\in \P(n-k(k+1)/2)$ for some $k\geqslant 0$. Append a diagram with rows of lengths $1, 2, \ldots k$ (i.e., the Ferrers diagram of $\eta(k)$ rotated by $180^\circ$) the top of Ferrers diagram of $\l$. We obtain a diagram with $n$ boxes. Draw the staircase profile of the new diagram. Let $\a$ be the partition whose parts are the length of the columns to the left of the staircase profile and let $\b$ be the partition whose parts are the length of the rows to the right of the staircase profile. Then $\a$ and $\b$ are partitions with distinct parts. Moreover, $k\leqslant \ell(\a)- \ell(\b)\leqslant k+1$. Color the parts of $\a$ with color $k\!\!\pmod 2$ and the parts of $\b$ with color $k+1\!\!\pmod 2$. Then $\vp(\l)$ is defined as the 2-color partition of $n$ whose parts are the colored parts of $\a$ and $\b$.

Conversely, start with  $\mu\in \D_2(n)$. Let $\ell_j(\m)$, $j=0,1$, be the number of parts of color $j$ in $\m$. 

%(i) If $\ell_0(\m)=\ell_1(\m)$, then take the conjugate of the shifted diagram of $\m^{(0)}$ and the shifted digram of $\m^{(1)}$ and join them so that they align at the top row. The obtained partition $\vp^{-1}(\m)$ belongs to $\A_0(n)$. \ccom{can be included in (ii) below. change $>$ to $\geq$. then $r=0$ and $k=0$. Maybe it can all be written in one description as in the explanation of $D^{(k)}_3$.}

(i) If $\ell_0(\m)\geqslant \ell_1(\m)$, let $r=\ell_0(\m)-\ell_1(\m)$. Let $k=\ds r+\frac{(-1)^r-1}{2}$. Remove the top $k$ rows (i.e., the rotated Ferrers diagram of $\eta(k)$) from the conjugate of the shifted diagram of $\m^{(0)}$ and join the remaining diagram with the shifted digram of $\m^{(1)}$ so they align at the top. The obtained partition $\vp^{-1}(\m)$ belongs to $\P(n-k(k+1)/2)$.

(ii) If $\ell_1(\m)>\ell_0(\m)$, let $r=\ell_1(\m)-\ell_0(\m)$. Let $k=\ds r-\frac{(-1)^r+1}{2}$. Remove the top $k$ rows (i.e., the rotated Ferrers diagram of $\eta(k)$) from the conjugate of the shifted diagram of $\m^{(1)}$ and join the remaining diagram with the shifted digram of $\m^{(0)}$ so they align at the top. The obtained partition $\vp^{-1}(\m)$ belongs to $\P(n-k(k+1)/2)$.

\begin{example} \label{eg1} Let $n=38, k=3$, and let $\l=7+7+6+6+4+2$ be a partition of $n-k(k+1)/2=32$. We add the rotated Ferrers diagram of $\eta(3)$ to the top of the Ferrers diagram of $\l$ and draw the staircase profile. \begin{center}\tiny{\begin{tikzpicture}[inner sep=0in,outer sep=0in]\node (n) {\begin{varwidth}{0cm}{\ydiagram{1,2,3,7,7,6,6,4,2}
}\end{varwidth}};
\draw[ultra thick] (-0,1.67)--(0.37,1.67)-- (0.37,1.3)--(.73,1.3)--(.73,.93)--(1.1,.93)--(1.1,.56)--(1.48,.56)--(1.48,.21)--(1.85,.21)--(1.85,-0.16)--(2.22,-0.16) ;
\end{tikzpicture}}\end{center}
Then $\a=9+8+6+5+3+2$ and $\b=3+2$. Since $k$ is odd, we have $\vp(\l)=9_1+8_1+6_1+5_1+3_1+3_0+2_1+2_0\in \D_2(38)$. 

Conversely, suppose $\mu=9_1+8_1+6_1+5_1+3_1+3_0+2_1+2_0\in \D_2(38)$. Then $\ell_0(\mu)=2$ and $\ell_1(\mu)=6$. We have $r=\ell_1(\m)-\ell_0(\m)=4$ and $k=3$. The diagrams of the conjugate of the shifted diagram of $\mu^{(1)}$ and the shifted diagram of $\mu^{(0)}$ are shown below.  \begin{center} \begin{figure}[htbp]\begin{subfigure}{.7\textwidth}\centering \tiny{\begin{tikzpicture}[inner sep=0in,outer sep=0in]\node (n) {\begin{varwidth}{0cm}{\ydiagram{1,2,3,4,5, 6, 6,  4, 2} 
}\end{varwidth}}; 
\end{tikzpicture}}\end{subfigure}%
\begin{subfigure}{.5\textwidth}
\small{\begin{tikzpicture}[inner sep=0in,outer sep=0in]\node (n) {\begin{varwidth}{0cm}{\young(\e\e\e,:\e\e)
}\end{varwidth}}; 
\end{tikzpicture}}\end{subfigure}%
\end{figure}\end{center}
Next, we remove the first $3$ rows from the conjugate of the shifted diagram of $\mu^{(1)}$ and join the remaining diagram and the shifted digram of $\m^{(0)}$ so they align at the top. We obtain $\vp^{-1}(\mu)=7+7+6+6+4+2\in \P(32)$. 

\end{example}
	
\subsection{Combinatorial Proof of Lemma \ref{L1}}

To determine the parity of $\smex(n)$, we pair partitions in $\D_2(n)$ as follows. If $\m\in \D_2(n)$, we denote by $\tilde \m$ the partition in $\D_2(n)$ obtained by interchanging the colors of the part of $\m$. Then $\m\neq \tilde \m$ if and only if $\m^{(0)}\neq \m^{(1)}$. If $n$ is odd, $\m^{(0)}\neq \m^{(1)}$ for all $\m\in \D_2(n)$ and $\smex$ is even. If $n$ is even, $\smex \equiv q(n/2) \pmod 2$, where, as usual,  $q(m)$ denotes the number of partitions of $m$ with distinct parts. Franklin's involution used to prove Euler's Pentagonal Number Theorem provides a pairing of partitions with distinct parts that shows that $q(m)$ is odd if and only if $m$ is a generalized pentagonal number. Thus, $\smex(n)$ is odd if and only if $n$ is twice a generalized pentagonal number. 

\section{Proofs of Theorem \ref{T2}} \label{pfT2}

\begin{proof}[Analytic proof of Theorem \ref{T2}]

	In \cite{Andrews12}, the authors considered Euler's pentagonal number theorem and proved the following truncated form:
\begin{equation} \label{TPNT}
\frac{(-1)^{k-1}}{(q;q)_\infty} \sum_{n=-(k-1)}^{k} (-1)^{j} q^{n(3n-1)/2}= (-1)^{k-1}+ \sum_{n=k}^\infty \frac{q^{{k\choose 2}+(k+1)n}}{(q;q)_n}
\begin{bmatrix}
n-1\\k-1
\end{bmatrix},
\end{equation}
where
$$(a;q)_n = \begin{cases}
1, & \text{if $n=0$,}\\
\prod\limits_{k=0}^{n-1} (1-aq^k), & \text{otherwise,}
\end{cases}
$$
and
$$
\begin{bmatrix}
n\\k
\end{bmatrix} 
=
\begin{cases}
\dfrac{(q;q)_n}{(q;q)_k(q;q)_{n-k}}, &  \text{if $0\leqslant k\leqslant n$},\\
0, &\text{otherwise.}
\end{cases}
$$
Multiplying both sides of \eqref{TPNT} by
$$
\frac{(q^2;q^2)_\infty}{(q,q^2)_\infty} = \sum_{n=0}^\infty q^{n(n+1)/2},
$$
we obtain
\begin{align*}
& (-1)^{k-1} \left( \bigg( \sum_{n=0}^\infty \sigma\, \mathrm{mex}(n) q^n \bigg) \bigg(  \sum_{n=-(k-1)}^{k} (-1)^{j} q^{n(3n-1)/2}\bigg) -\sum_{n=0}^\infty q^{n(n+1)/2}\right)   \\
& =   \left( \sum_{n=0}^\infty q^{n(n+1)/2} \right) \left( \sum_{n=0}^\infty M_k(n) q^n\right),
\end{align*}
where we have invoked the generating function for $ \sigma\, \mathrm{mex}(n)$ \cite{BM19, AN19},
$$
\sum_{n=0}^\infty \sigma\, \mathrm{mex}(n) q^n = \frac{(q^2;q^2)_\infty}{(q;q)_\infty(q;q^2)_\infty}
$$
and the generating function for $M_k(n)$ \cite{Andrews12},
$$
\sum_{n=0}^\infty M_k(n) q^n = \sum_{n=k}^\infty \frac{q^{{k\choose 2}+(k+1)n}}{(q;q)_n}
\begin{bmatrix}
n-1\\k-1
\end{bmatrix}.
$$
The proof follows easily considering Cauchy's multiplication of two power series.\medskip
\end{proof}

\begin{proof}[Combinatorial proof of Theorem \ref{T2}]

The statement of Theorem \ref{T2} is equivalent to identity \eqref{T2k=1} together with
\begin{align}
& \sigma \, \mex
\left(n-\frac{k(3k+1)}{2}\right)-\sigma \, \mex
\left(n-\frac{k(3k+5)}{2}-1\right)\nonumber \\
& \qquad\qquad = \sum_{j=0}^\infty \Big(M_k\big(n-j(j+1)/2\big) +M_{k+1}\big(n-j(j+1)/2\big)\Big).\label{T2kequiv}
\end{align}
% $$\sigma \, \mex
%\left(n-\frac{k(3k+1)}{2}\right)-\sigma \, \mex
%\left(n-\frac{k(3k+5)}{2}-1\right) \ \ \ \ \ \ \ \ \ \ \ \ \ \ \ \ \ \ \ \ \ \ $$ \begin{equation} \label{T2kequiv} \ \ \ \ \ \ \ \ \ \ \ \ \ \ \ \ \ \ \ \ \ \ \ \ \ \ \ \ \ \ \ \ \  = \sum_{j=0}^\infty \left(M_k\big(n-j(j+1)/2\big) +M_{k+1}\big(n-j(j+1)/2\big)\right).
%\end{equation}
Using \eqref{smex as sum}, identity \eqref{T2kequiv} becomes
\begin{align}
& \sum_{j=0}^\infty \Bigg(p
\bigg(n-\frac{j(j+1)}{2}-\frac{k(3k+1)}{2}\bigg)-p\bigg(n-\frac{j(j+1)}{2}-\frac{k(3k+5)}{2}-1\bigg)\Bigg)\nonumber \\
& \qquad\qquad = \sum_{j=0}^\infty \Big(M_k\big(n-j(j+1)/2\big) +M_{k+1}\big(n-j(j+1)/2\big)\Big). \label{T2kequiv1} 
\end{align}
Identity \eqref{T2kequiv1} was proved combinatorially in \cite{Y15}. Together with the combinatorial proof of \eqref{smex as sum}, this gives a combinatorial proof of Theorem \ref{T2}. 
\end{proof}

\medskip

Next, we  give a combinatorial interpretation for $\ds  \sum_{j=0}^\infty M_k\big(n-j(j+1)/2\big)$. First, we introduce some notation. Given an integer $r$, let $\rm{sign}(r)$ denote the sign of $r$, i.e.
 $$\rm{sign}(r)=\begin{cases} 1 & \mbox{ if } r \geqslant 0\\  -1 & \mbox{ if } r <0.\end{cases}$$
For  integers $k,n$ such that $k\geqslant 1$ and $n\geqslant 0$, we denote by $D^{(k)}_3(n)$  the number of partitions $\mu$ of $n$ into distinct parts using three colors, $0,1,2$, and satisfying the following conditions: 

\begin{enumerate}
	\item[(i)] $\mu$ has exactly $k$ parts of color $2$ and, if $k>1$, twice the smallest part of color $2$ is greater than largest part of color $2$.
	\item[(ii)] Let $r=\ell_0(\mu)-\ell_1(\mu)$ be the signed difference in the number of parts colored $0$ and the number of parts colored $1$ in $\mu$. Let $j=\ds |r|-\frac{1}{2}+\rm{sign}(r)\frac{(-1)^r}{2}$.
	The largest part of color $j \!\!\pmod 2$ must equal $j$ more that the smallest part of color $2$.
\end{enumerate}

%(i) $\mu$ has exactly $k$ parts of color $2$ and, if $k>1$, twice the smallest part of color $2$ is greater than largest part of color $2$. \medskip
%
%(ii) Let $r=\ell_0(\mu)-\ell_1(\mu)$ be the signed difference in the number of parts colored $0$ and the number of parts colored $1$ in $\mu$. Let $j=\ds |r|-\frac{1}{2}+\rm{sign}(r)\frac{(-1)^r}{2}$.
%The largest part of color $j \pmod 2$ must equal $j$ more that the smallest part of color $2$. \medskip

Then, we have the following proposition.

\begin{proposition}\label{prop}
For  integers $k,n$ such that $k\geqslant 1$ and $n\geqslant 0$, we have \begin{equation}\label{p1}\sum_{j=0}^\infty M_k\big(n-j(j+1)/2\big)=D^{(k)}_3(n).\end{equation}
\end{proposition}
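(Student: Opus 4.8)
The plan is to lift the Sylvester--Wright bijection $\vp$ of Section~\ref{s1} to a bijection between the set $\D^{(k)}_3(n)$ of $3$-colored partitions enumerated by $D^{(k)}_3(n)$ and the disjoint union, over $j\geqslant 0$, of the sets of partitions counted by $M_k\!\left(n-j(j+1)/2\right)$; since that union has cardinality $\sum_{j\geqslant 0}M_k\!\left(n-j(j+1)/2\right)$, this yields \eqref{p1}. So fix $k\geqslant 1$ and $n\geqslant 0$, let $j\geqslant 0$, and let $\l$ be a partition of $n-j(j+1)/2$ with $\mex(\l)=k$ and more parts $>k$ than parts $<k$. First I would rephrase these hypotheses. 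Writing $\l=\pi\cup\rho$, where $\pi$ collects the parts $>k$ and $\rho$ the parts $<k$, and setting $N=\ell(\pi)$, the condition $\mex(\l)=k$ amounts to saying that every part of $\pi$ is $\geqslant k+1$, every part of $\rho$ is $\leqslant k-1$, and each of $1,\dots,k-1$ occurs in $\rho$, while ``more parts $>k$ than parts $<k$'' is exactly $\ell(\rho)\leqslant N-1$ (so $N\geqslant k$). Hence $\rho=(k-1,k-2,\dots,1)\cup\rho^{\ast}$ with $\rho^{\ast}$ an arbitrary partition fitting inside an $(N-k)\times(k-1)$ rectangle.

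Next I would manufacture the parts of color $2$ of the image from the pair $(N,\rho^{\ast})$. Conjugating $\rho^{\ast}$ gives a partition in a $(k-1)\times(N-k)$ rectangle; padding it to exactly $k-1$ parts, adding the staircase $(k-1,k-2,\dots,1)$, and then adding $N$ to every entry produces integers $c_{1}>c_{2}>\dots>c_{k-1}>N$ with $c_{1}\leqslant 2N-1$, and each step of this is clearly reversible. I then take $c_{1},\dots,c_{k-1}$ together with $c_{k}:=N$ to be the $k$ parts of color $2$; with this choice, condition~(i) of $\D^{(k)}_3(n)$ (exactly $k$ parts of color $2$, and, for $k>1$, twice the smallest is larger than the largest) is precisely the assertion that the $c_i$ arise in this way. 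As a sanity check, the color-$2$ generating function is then $q^{\binom{k}{2}}\sum_{N\geqslant k}q^{kN}\begin{bmatrix}N-1\\k-1\end{bmatrix}$, which matches the natural factorization of the known generating function for $M_k(n)$.

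Then I would feed the remainder to $\vp$. Let $\hat\pi$ be the partition obtained from $\pi$ by subtracting $k$ from each of its $N$ parts (so $\ell(\hat\pi)=N$ and all parts of $\hat\pi$ are $\geqslant 1$), and let $\nu$ be the image of $\hat\pi$ under $\vp$ taken with shift $j$, a partition of $|\hat\pi|+j(j+1)/2$ into distinct parts in two colors. The image of $(j,\l)$ is the $3$-colored partition whose parts of color $2$ are $c_{1},\dots,c_{k}$ and whose parts of colors $0,1$ are those of $\nu$; its weight is $|\rho|+|\pi|+j(j+1)/2=|\l|+j(j+1)/2=n$. For condition~(ii): by the construction of $\vp$, the largest part of color $j\bmod 2$ in $\nu$ is the height of the leftmost column of the diagram used in defining $\vp$, which all $j+\ell(\hat\pi)$ rows reach, namely $j+\ell(\hat\pi)=j+N$, i.e.\ exactly $j$ more than the smallest part of color $2$; and the shift $j$ is recovered from the color-counts of $\nu$ by precisely the formula appearing in condition~(ii). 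Hence the image lies in $\D^{(k)}_3(n)$.

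Finally I would check that the inverse is forced: given $\m\in\D^{(k)}_3(n)$, remove its $k$ parts of color $2$, read off $N$ (the smallest of them) and recover $\rho$ by inverting the color-$2$ encoding; apply $\vp^{-1}$ to the remaining $2$-colored partition to obtain a shift $j$ and a partition $\hat\pi$; condition~(ii) forces $\ell(\hat\pi)=N$, so $\pi:=\hat\pi+(k^{N})$ (adding $k$ to each part) has $N$ parts all $\geqslant k+1$, and $\l:=\pi\cup\rho$ has $\mex(\l)=k$, more parts $>k$ than $<k$, and weight $n-j(j+1)/2$. The two maps are visibly mutually inverse, which proves the proposition. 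The step I expect to be the main obstacle is the construction of the color-$2$ encoding: arranging the staircase shifts so that the inequality ``twice the smallest exceeds the largest'' is exactly the image condition and, at the same time, so that this encoding meshes cleanly with $\vp$ through condition~(ii). By contrast, the combinatorial reformulation of the $\mex$ and part-count hypotheses and the bookkeeping of weights should be routine.
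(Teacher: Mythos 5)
Your proof is correct and takes essentially the same route as the paper's: the color-$2$ parts you manufacture from $(N,\rho^{\ast})$ are exactly the lengths of the first $k$ columns of the Ferrers diagram of $\l$, and applying $\vp$ with shift $j$ to $\hat\pi$ (the diagram with those columns removed) is precisely the paper's construction. You simply supply the verification of conditions (i) and (ii) that the paper dismisses as straightforward.
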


\begin{proof}
Take a partition counted by $M_k\big(n-j(j+1)/2\big)$ and consider its Ferrers diagram. Remove the first $k$ columns and color the length of each of these columns with color $2$. To the remaining Ferrers diagram, add the rotated Ferrers diagram of a staircase $\eta(j)$ of height $j$ and perform the transformation in the combinatorial proof of Theorem \ref{T1}. It is now  straight forward that this transformation is a bijection between the sets of partitions counted by the two sides of \eqref{p1}.\end{proof}

Combining Theorems \ref{T1} and \ref{T2}, and Proposition \ref{prop} we obtain the following corollary which, by the discussion above, has both analytic and combinatorial proofs. 

\begin{corollary}
	For  integers $k,n$ such that $k\geqslant 1$ and $n\geqslant 0$, we have 
	%\begin{align*}
	$$(-1)^{\max(0,k-1)} \left( \sum_{j=-\max(0,k-1)}^k (-1)^j \sigma\, \mathrm{mex}\big(n-j(3j-1)/2\big) - \delta(n) \right) \\
	= D^{(k)}_3(n).$$
	%\end{align*}
	
\end{corollary}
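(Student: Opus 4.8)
The plan is to obtain the corollary by simply concatenating Theorem~\ref{T2} with Proposition~\ref{prop}, after a one-line reconciliation of the summation ranges. First I would note that for every integer $k\geqslant 1$ one has $\max(0,k-1)=k-1$, so that the left-hand side of the asserted identity, together with its index range $-\max(0,k-1)\leqslant j\leqslant k$, coincides verbatim with the left-hand side of Theorem~\ref{T2}; the $\max$ is written only so that the statement still parses at $k=0$, and under the standing hypothesis $k\geqslant 1$ it is inert. Hence Theorem~\ref{T2} rewrites the left-hand side of the corollary as $\sum_{j=0}^{\infty}M_k\big(n-j(j+1)/2\big)$.

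Next I would invoke Proposition~\ref{prop}, which asserts precisely that $\sum_{j=0}^{\infty}M_k\big(n-j(j+1)/2\big)=D^{(k)}_3(n)$. Chaining the two equalities gives the corollary. To justify the remark that the result admits \emph{both} an analytic and a combinatorial proof, I would point out that Theorem~\ref{T2} was established analytically, from the truncated pentagonal number identity \eqref{TPNT} via Cauchy multiplication of power series, and also combinatorially, by reducing through \eqref{smex as sum} to the bijection of \cite{Y15}; whereas Proposition~\ref{prop} is proved bijectively by stripping the first $k$ columns of a Ferrers diagram, recording their lengths in color $2$, and then applying the map $\varphi$ of Section~\ref{s1}. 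Composing the combinatorial halves of these two arguments yields a purely combinatorial proof of the corollary, and composing the analytic halves yields the analytic one.

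Since both inputs are already in hand, there is no genuine obstacle here; the only point warranting a moment's attention is to confirm that the families of objects produced by the combinatorial proof of Theorem~\ref{T2} and those manipulated in Proposition~\ref{prop} are organized by the same parameter, namely the triangular-shift index $j$, so that the composition is well defined and the resulting combinatorial proof of the corollary is an honest bijection between the partitions counted by $D^{(k)}_3(n)$ and the signed family (resolved by a sign-reversing involution as in \cite{Y15}) appearing on the $\sigma\,\mathrm{mex}$ side.
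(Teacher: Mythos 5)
Your proposal is correct and is essentially identical to the paper's own argument, which likewise obtains the corollary by combining Theorem~\ref{T2} with Proposition~\ref{prop} (the $\max(0,k-1)$ being inert for $k\geqslant 1$ and serving only to make the $k=0$ case reduce to Theorem~\ref{T1}). Your additional remarks on which halves of the inputs yield the analytic versus the combinatorial proof match the paper's "discussion above" justification.
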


Note that, if $k=0$, the statement of the corollary reduces to Theorem \ref{T1}.

\section{Connections with overpartitions} \label{op}

Overpartitions
are ordinary partitions with the added 
condition that the first appearance 
of any part may be overlined or not.
%The generating function for $\overline{p}(n)$, 
%the number of overpartitions of $n$ 
%(cf. Corteel and Lovejoy \cite{Corteel}) is given by
%$$\sum_{n=0}^\infty \overline{p}(n)^n q^n = \frac{(-q;q)_\infty}{(q;q)_\infty}.$$
There are eight overpartitions of $3$:
$$
3, \overline{3}, 2+1, \overline{2}+1, 2+\overline{1}, \overline{2}+\overline{1}, 1+1+1, \overline{1}+1+1.
$$
In \cite{AM18}, the authors denoted by  $\overline{M}_k(n)$ the number of overpartitions 
of $n$ in which the first part larger than $k$ appears at least $k+1$ times. 
For example, $\overline{M}_2(12)=16$, 
and the partitions in question are 
$4+4+4$,
$\overline{4}+4+4$,
$3+3+3+3$,
$\overline{3}+3+3+3$,
$3+3+3+2+1$,
$3+3+3+\overline{2}+1$,
$3+3+3+2+\overline{1}$,
$3+3+3+\overline{2}+\overline{1}$,
$\overline{3}+3+3+2+1$,
$\overline{3}+3+3+\overline{2}+1$,
$\overline{3}+3+3+2+\overline{1}$,
$\overline{3}+3+3+\overline{2}+\overline{1}$,
$3+3+3+1+1+1$,
$3+3+3+\overline{1}+1+1$,
$\overline{3}+3+3+1+1+1$,
$\overline{3}+3+3+\overline{1}+1+1$.\medskip

We have the following identity.

\begin{theorem}\label{T3}
	For integers $k,n>0$, we have
	\begin{align*}
	& (-1)^k \left( \sigma\, \mathrm{mex}(n) + 2\sum_{j=1}^{k} (-1)^j \sigma\, \mathrm{mex}(n-j^2) - \delta'(n)\right) \\
	& =\sum_{j=-\infty}^\infty (-1)^j  \overline{M}_k\big(n-j(3j-1)\big),
	\end{align*}
	where% $\delta$  is the characteristic function of the set of twice generalized pentagonal numbers, i.e.,
	$$
	\delta'(n) = \begin{cases}
	(-1)^m,& \text{if $n=m(3m-1),\ m\in\mathbb{Z}$,}\\
	0,& \text{otherwise.}
	\end{cases} 
	$$
\end{theorem}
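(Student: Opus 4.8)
\textbf{Proof proposal for Theorem \ref{T3}.}

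The plan is to mimic the analytic proof of Theorem \ref{T2}, replacing the truncated pentagonal number theorem with the corresponding truncated identity underlying the overpartition function $\overline{M}_k(n)$, and replacing the generating function for $\sigma\,\mex(n)$ with the same one. First I would recall from \cite{AM18} the relevant truncated theta identity: there the authors prove a truncated form of Jacobi's identity (or of the relevant Euler-type product), which can be written as
\begin{equation*}
\frac{(-q;q)_\infty}{(q;q)_\infty}\sum_{j=-(k-1)}^{k}(-1)^j q^{j^2} \;=\; (-1)^{k-1}\cdot(\text{explicit }\delta\text{-type term}) \;+\; \sum_{n\geqslant 0}\overline{M}_k(n)q^n,
\end{equation*}
up to the precise normalization used in that paper; the key point is that the left side is, up to sign, $\sum_j (-1)^j q^{j^2}$ divided by nothing dangerous, times the overpartition generating function $\frac{(-q;q)_\infty}{(q;q)_\infty}$, and the right side is a combination of $\overline{M}_k$ and a pentagonal-type correction. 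I would then multiply both sides by the generating function
$$
\sum_{n=0}^\infty \sigma\,\mex(n)q^n=\frac{(q^2;q^2)_\infty}{(q;q)_\infty(q;q^2)_\infty},
$$
and read off coefficients. The factor $\sum_j(-1)^j q^{j^2}$ combines with $\sigma\,\mex$ to give the left-hand side $\sigma\,\mex(n)+2\sum_{j\geqslant 1}(-1)^j\sigma\,\mex(n-j^2)$ (the factor $2$ coming from pairing $j$ and $-j$), while the $\delta'$ term arises from multiplying the correction term against the pentagonal expansion $\sum_m (-1)^m q^{m(3m-1)/2}$ hidden inside the product identities. On the right, the term $\sum_j(-1)^j\overline{M}_k(n-j(3j-1))$ appears once one recognizes that the remaining product factors expand as $\sum_j(-1)^j q^{j(3j-1)}$ (note the argument $j(3j-1)$, not $j(3j-1)/2$, which signals that the relevant modulus is doubled, consistent with the $q^{j^2}$ rather than $q^{j(3j\pm1)/2}$ appearing on the left).

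The second, combinatorial route would be to combine Theorem \ref{T1}, the interpretation $\overline{M}_k(n)$ in terms of overpartitions with a part $>k$ repeated $\geqslant k+1$ times, and a sieve argument analogous to Proposition \ref{prop}. Concretely I would express both sides as signed counts of overpartitions or $2$-colored objects, then construct a sign-reversing involution on the "error" terms. The engine here is the same staircase-profile / Sylvester–Wright bijection $\vp$ from section \ref{s1}, now applied fibrewise after stripping the repeated large part (as in the proof of Proposition \ref{prop}), together with Franklin's involution to account for the alternating pentagonal sums $j(3j-1)$. Since the excerpt already grants us Theorem \ref{T1} and the machinery of section \ref{combAN}, and since \cite{AM18} supplies the truncated identity for $\overline{M}_k$, the analytic proof is essentially a bookkeeping exercise in Cauchy multiplication of power series, exactly as in the analytic proof of Theorem \ref{T2}.

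The main obstacle I anticipate is purely one of \emph{matching normalizations and signs}: the truncated overpartition identity in \cite{AM18} is stated with its own sign conventions and its own correction term, and one must check carefully that, after multiplication by the $\sigma\,\mex$ generating function, the correction term collapses exactly to $(-1)^k\delta'(n)$ with $\delta'(n)=(-1)^m$ when $n=m(3m-1)$ — in particular getting the $(-1)^m$ weight and the argument $m(3m-1)$ (rather than $m(3m\pm1)/2$) right. A related subtlety is the overall factor of $2$ in $2\sum_{j=1}^k(-1)^j\sigma\,\mex(n-j^2)$, which must be traced to the symmetry $j\leftrightarrow -j$ in $\sum_{j} (-1)^j q^{j^2}$ while the $j=0$ term contributes the lone unadorned $\sigma\,\mex(n)$. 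Once these bookkeeping points are pinned down, "the proof follows easily considering Cauchy's multiplication of two power series," in the words of the analytic proof of Theorem \ref{T2}.
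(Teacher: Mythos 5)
Your overall strategy --- take the truncated theta identity of \cite[Theorem 7]{AM18} and perform a Cauchy multiplication, exactly as in the analytic proof of Theorem \ref{T2} --- is the same as the paper's. But the central step as you state it would fail. You propose to multiply the truncated identity by the generating function $\sum_{n\geqslant 0}\sigma\,\mathrm{mex}(n)q^n=\frac{(q^2;q^2)_\infty}{(q;q)_\infty(q;q^2)_\infty}$. The left-hand side of the AM18 identity already carries the overpartition factor $\frac{(-q;q)_\infty}{(q;q)_\infty}$, so this multiplication leaves an unwanted extra copy of that factor on the left; and on the right it convolves $\overline{M}_k$ with $\sigma\,\mathrm{mex}$ rather than with a signed sequence supported on $j(3j-1)$ --- under your multiplication there are no ``remaining product factors'' to supply $\sum_j(-1)^jq^{j(3j-1)}$, since the right side of the AM18 identity is just $1+(-1)^k\sum_n\overline{M}_k(n)q^n$. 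The correct multiplier is $(q^2;q^2)_\infty=\sum_{n=-\infty}^\infty(-1)^nq^{n(3n-1)}$, and the identity you are missing is
$$
\frac{(-q;q)_\infty}{(q;q)_\infty}\cdot(q^2;q^2)_\infty=\frac{(q^2;q^2)_\infty^2}{(q;q)_\infty^2}=\sum_{n=0}^\infty\sigma\,\mathrm{mex}(n)q^n .
$$
With that multiplier the $\sigma\,\mathrm{mex}$ generating function \emph{emerges} on the left, the constant term $1$ on the right becomes $\sum_m(-1)^mq^{m(3m-1)}$ --- which is exactly the $\delta'$ term, supported on $m(3m-1)$ and not on $m(3m-1)/2$ as in your description --- and the $\overline{M}_k$ series is convolved with the same sequence, yielding $\sum_j(-1)^j\overline{M}_k(n-j(3j-1))$.

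Two smaller points. The truncation in \cite{AM18} is $1+2\sum_{j=1}^k(-1)^jq^{j^2}=\sum_{j=-k}^{k}(-1)^jq^{j^2}$, not $\sum_{j=-(k-1)}^{k}(-1)^jq^{j^2}$ as you wrote; the latter is the pentagonal-number truncation relevant to Theorem \ref{T2}, not the one for squares. And the paper gives only the analytic proof of Theorem \ref{T3}; your proposed combinatorial route is extra and, as sketched, far from realizable --- indeed Conjecture \ref{T4} indicates that even a combinatorial interpretation of the right-hand side is open.
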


\begin{proof}
	According to \cite[Theorem 7]{AM18}, we have
	\begin{align}
& \frac{(-q;q)_{\infty}} {(q;q)_{\infty}} \left(1 + 2 \sum_{j=1}^{k} (-1)^j q^{j^2} \right) \label{eq:1.11} \\
& \qquad = 1+2 (-1)^k \frac{(-q;q)_k}{(q;q)_k} \sum_{j=0}^{\infty} \frac{q^{(k+1)(k+j+1)}(-q^{k+j+2};q)_{\infty}}{(1-q^{k+j+1})(q^{k+j+2};q)_{\infty}},\notag
\end{align}
where
	$$\sum_{n=0}^\infty \overline{M}_k(n) q^n = 
	2 \frac{(-q;q)_k}{(q;q)_k} \sum_{j=0}^{\infty} \frac{q^{(k+1)(k+j+1)}(-q^{k+j+2};q)_{\infty}}{(1-q^{k+j+1})(q^{k+j+2};q)_{\infty}}.$$
	Multiplying both sides of \eqref{eq:1.11} by
	$$
	(q^2,q^2)_\infty = \sum_{n=-\infty}^\infty (-1)^n q^{n(3n-1)},
	$$
	we obtain
	\begin{align*}
	& (-1)^k \left(\Bigg(\sum_{n=0}^\infty \sigma\, \mathrm{mex}(n) q^n \Bigg) \Bigg( 1 + 2 \sum_{j=1}^{k} (-1)^j q^{j^2}\Bigg) - \sum_{n=-\infty}^\infty (-1)^n q^{n(3n-1)} \right) \\
	& = \left( \sum_{n=-\infty}^\infty (-1)^n q^{n(3n-1)} \right) \left( \sum_{n=0}^\infty \overline{M}_k(n) q^n \right)
	\end{align*}
	and the proof follows easily.
\end{proof}

Related to Theorem \ref{T3}, we remark that
there is a substantial amount of numerical evidence to conjecture the following inequality.

\begin{conjecture}\label{T4}
	For $k,n>0$,
	\begin{align*}
	\sum_{j=-\infty}^\infty (-1)^j  \overline{M}_k\big(n-j(3j-1)\big)\geqslant 0,
	\end{align*}
	with strict inequality if $n\geqslant (k+1)^2$.
\end{conjecture}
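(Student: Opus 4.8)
The plan is to deduce Conjecture~\ref{T4} from the monotonicity of $\sigma\,\mathrm{mex}$, using Theorem~\ref{T3} together with its untruncated counterpart. First, by Theorem~\ref{T3}, for $k,n>0$ the sum in the conjecture equals $(-1)^k\big(\sigma\,\mathrm{mex}(n)+2\sum_{j=1}^{k}(-1)^j\sigma\,\mathrm{mex}(n-j^2)-\delta'(n)\big)$. On the other hand, since $\sum_{n\geqslant 0}\sigma\,\mathrm{mex}(n)q^n=(q^2;q^2)_\infty/\big((q;q)_\infty(q;q^2)_\infty\big)=(-q;q)_\infty^2$ and, by Gauss's identity, $1+2\sum_{j\geqslant 1}(-1)^jq^{j^2}=(q;q)_\infty/(-q;q)_\infty$, multiplying these gives
$$(-q;q)_\infty^2\left(1+2\sum_{j=1}^{\infty}(-1)^jq^{j^2}\right)=(q;q)_\infty(-q;q)_\infty=(q^2;q^2)_\infty=\sum_{n}\delta'(n)q^n$$
by Euler's pentagonal number theorem (with $q\mapsto q^2$). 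Comparing coefficients yields the untruncated identity $\sigma\,\mathrm{mex}(n)+2\sum_{j\geqslant 1}(-1)^j\sigma\,\mathrm{mex}(n-j^2)=\delta'(n)$; subtracting the truncated expression above from it turns the quantity in the conjecture into the single alternating tail
$$\sum_{j=-\infty}^\infty(-1)^j\overline{M}_k\big(n-j(3j-1)\big)=2\sum_{j=k+1}^{\infty}(-1)^{j-k-1}\sigma\,\mathrm{mex}(n-j^2),$$
which is a \emph{finite} sum (the summands vanish once $j^2>n$) whose leading term, at $j=k+1$, carries a plus sign.

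The second step is to prove that $\sigma\,\mathrm{mex}$ is strictly increasing on $\mathbb{Z}_{\geqslant 0}$, that is, $\sigma\,\mathrm{mex}(n)>\sigma\,\mathrm{mex}(n-1)$ for every $n\geqslant 1$. This is immediate from \eqref{smex as sum}: with $T_t=t(t+1)/2$,
$$\sigma\,\mathrm{mex}(n)-\sigma\,\mathrm{mex}(n-1)=\sum_{t\geqslant 0}\big(p(n-T_t)-p(n-1-T_t)\big)\geqslant 0$$
because $p$ is non-decreasing, and at least one summand is $\geqslant 1$ for each $n\geqslant 1$ (the term $t=0$ if $n\geqslant 2$, the term $t=1$ if $n=1$).

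The third step combines the two. If $n<(k+1)^2$, every summand $\sigma\,\mathrm{mex}(n-j^2)$ with $j\geqslant k+1$ vanishes and the alternating sum is $0$. If $n\geqslant (k+1)^2$, the arguments $n-(k+1)^2>n-(k+2)^2>\cdots$ decrease, hence by Step~2 the values $\sigma\,\mathrm{mex}(n-(k+1)^2)\geqslant\sigma\,\mathrm{mex}(n-(k+2)^2)\geqslant\cdots\geqslant 0$ form a non-increasing sequence, so grouping the finitely many terms into consecutive pairs shows the sum is $\geqslant 0$; and since the leading term is $\geqslant 1$ and, when there is a second nonzero term, it is strictly smaller than the leading one (Step~2 applied to two arguments that are $\geqslant 0$ and differ by $2k+3\geqslant 1$), the sum is in fact $>0$. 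This proves both assertions of the conjecture.

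There is no genuine analytic or combinatorial obstacle along this route; the only point requiring care is the bookkeeping in the first step, namely matching the truncation remainder of \eqref{eq:1.11} with the alternating tail produced by the Gauss--Euler identity, keeping track of $\delta'$ and of the global sign $(-1)^k$. Should one instead want a proof in the purely combinatorial spirit of Sections~\ref{combAN} and \ref{pfT2}, working directly with the overpartitions enumerated by $\overline{M}_k$, the hard part would be to construct a sign-reversing involution on the set of pairs consisting of a partition into distinct even parts weighted by Franklin's sign together with an overpartition counted by $\overline{M}_k$, whose fixed points all carry the sign $(-1)^k$ and are nonempty precisely when $n\geqslant(k+1)^2$; but the reduction above is considerably shorter and already settles the conjecture.
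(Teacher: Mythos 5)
The paper does not actually prove this statement: it is left as a conjecture, supported only by ``a substantial amount of numerical evidence,'' so there is no proof of record to compare yours against. Having checked your argument step by step, I believe it is a correct and complete proof, and it settles the conjecture. The key reduction is sound: since
$\frac{(q^2;q^2)_\infty}{(q;q)_\infty(q;q^2)_\infty}=\frac{1}{(q;q^2)_\infty^2}=(-q;q)_\infty^2$
and Gauss's identity gives $1+2\sum_{j\geqslant1}(-1)^jq^{j^2}=(q;q)_\infty/(-q;q)_\infty$, the product is $(q;q)_\infty(-q;q)_\infty=(q^2;q^2)_\infty=\sum_n\delta'(n)q^n$, which is exactly the untruncated companion of Theorem~\ref{T3}; subtracting the truncation and applying Theorem~\ref{T3} (equivalently, multiplying the tail of \eqref{eq:1.11} by $(q^2;q^2)_\infty$) yields
\begin{equation*}
\sum_{j=-\infty}^\infty(-1)^j\,\overline{M}_k\big(n-j(3j-1)\big)
=2\sum_{i=1}^{\infty}(-1)^{i-1}\,\sigma\,\mathrm{mex}\big(n-(k+i)^2\big),
\end{equation*}
and I verified this both at the level of generating functions and numerically for small $k,n$ (e.g.\ $k=1$, $n=4$ gives $2=2\,\sigma\,\mathrm{mex}(0)$). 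Your monotonicity step is also correct: \eqref{smex as sum} gives $\sigma\,\mathrm{mex}(n)-\sigma\,\mathrm{mex}(n-1)=\sum_{t\geqslant0}\big(p(n-T_t)-p(n-1-T_t)\big)>0$ for $n\geqslant1$ (use $t=0$ if $n\geqslant2$ and $t=1$ if $n=1$), so the terms $\sigma\,\mathrm{mex}(n-(k+i)^2)$ form a non-increasing non-negative sequence and the alternating sum is $\geqslant0$ by pairing. The one place I would ask you to write out the case split explicitly is the strict inequality: if $(k+1)^2\leqslant n<(k+2)^2$ only the first term survives and equals $\sigma\,\mathrm{mex}(n-(k+1)^2)\geqslant\sigma\,\mathrm{mex}(0)=1$, while if $n\geqslant(k+2)^2$ the first difference is already $\geqslant1$ and the remaining pairs are $\geqslant0$; your parenthetical gestures at this but conflates the two cases. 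As a bonus, your identity shows the sum vanishes identically for $n<(k+1)^2$ (consistent with $\overline{M}_k$ being supported on $[(k+1)^2,\infty)$), and, via Theorem~\ref{T1}, it expresses the conjectured sum as $2\sum_{i\geqslant1}(-1)^{i-1}D_2\big(n-(k+i)^2\big)$, which goes a long way toward the combinatorial interpretation the authors request after the conjecture.
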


It would be very appealing to have a combinatorial interpretation for the sum in this conjecture.

\section{Connections with partitions into distinct parts} \label{dist}

Following the notation for the number of partitions of $n$ into distinct parts of two colors, we denote by $D_1(n)$ the number of partitions of $n$ into distinct parts. We prove the following identity.

%We provide two identities involving $D_1(n)$.
%
%\begin{theorem}
%	For $n\geqslant 0$,
%	\begin{enumerate}
%		\item[1.] $\sigma\, \mathrm{mex} (n) = \sum\limits_{k=0}^n D_1(k) D_1(n-k)$;
%		\item[2.] $
%		D_1(n) = \sum\limits_{k=0}^n p_{e-o}(k) \sigma\, \mathrm{mex} (n-k),
%		$
%		\item[] where $p_{e-0}(n)$ is the difference between the number of partitions of $n$ into an even number of parts and the number of partitions of $n$ into an odd number of parts.
%	\end{enumerate}
%\end{theorem}
%
%\begin{proof}
%	We take into account that:
%	\begin{align*}
%	& \sum_{n=0}^\infty \sigma\, \mathrm{mex} (n) q^n = \frac{1}{(q;q^2)^2_\infty},\\
%	& \sum_{n=0}^\infty D_1(n) q^n = \frac{1}{(q;q^2)_\infty}, \\
%	& \sum_{n=0}^\infty p_{e-o}(n) q^n = (q;q^2)_\infty.
%	\end{align*}
%\end{proof}

\begin{theorem}\label{T5}
	For any integer $n\geqslant 0$, we have
	\begin{equation}\label{sumdist}
	\sum_{j=0}^\infty (-1)^{j(j+1)/2} \sigma\, \mathrm{mex}\big(n-j(j+1)/2 \big)
	= \sum_{j=0}^\infty D_1\left( \frac{n-j(j+1)/2}{2}\right), 
	\end{equation}
	where $D_1(x)=0$ if $x$ is not a positive integer.
\end{theorem}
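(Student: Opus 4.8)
The plan is to prove \eqref{sumdist} by manipulating generating functions, converting everything into a statement about formal power series in $q$ and reducing it to a known product identity. First I would recall, from \cite{BM19, AN19}, that
$$
\sum_{n=0}^\infty \sigma\,\mathrm{mex}(n)\,q^n = \frac{(q^2;q^2)_\infty}{(q;q)_\infty (q;q^2)_\infty}
= \sum_{n=0}^\infty p(n)\,q^n \cdot \sum_{k\geqslant 0} q^{k(k+1)/2},
$$
which is just the generating-function form of \eqref{smex as sum}. The left-hand side of \eqref{sumdist} has the shape $\sum_j (-1)^{j(j+1)/2}\sigma\,\mathrm{mex}(n-j(j+1)/2)$, so its generating function is the product of $\sum_n \sigma\,\mathrm{mex}(n)q^n$ with $\sum_{j\geqslant 0}(-1)^{j(j+1)/2}q^{j(j+1)/2}$. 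The sign $(-1)^{j(j+1)/2}$ runs through the pattern $+,+,-,-,+,+,-,-,\dots$ as $j=0,1,2,3,\dots$, so this second series is $\sum_{j\geqslant 0}(-1)^{j(j+1)/2}q^{j(j+1)/2} = (1+q) - (q^3+q^6) + (q^{10}+q^{15}) - \cdots$, which I would identify in closed product form (it is a theta-type series; one can write it via $\sum_{j\geqslant 0}(-1)^{j(j+1)/2} x^{j(j+1)/2}$ and split into $j\equiv 0,3\pmod 4$ versus $j\equiv 1,2\pmod 4$, or recognize it directly from a Jacobi triple product specialization).

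For the right-hand side, $D_1$ is the distinct-parts counting function, so $\sum_{m\geqslant 0} D_1(m)q^m = (-q;q)_\infty$. The inner argument $(n-j(j+1)/2)/2$ forces $n-j(j+1)/2$ to be a nonnegative even integer, so $\sum_{n}\big(\sum_{j\geqslant 0} D_1((n-j(j+1)/2)/2)\big)q^n$ equals $(-q^2;q^2)_\infty \cdot \sum_{j\geqslant 0} q^{j(j+1)/2}$. Thus \eqref{sumdist} is equivalent to the formal identity
$$
\frac{(q^2;q^2)_\infty}{(q;q)_\infty (q;q^2)_\infty}\cdot\Bigg(\sum_{j\geqslant 0}(-1)^{j(j+1)/2}q^{j(j+1)/2}\Bigg)
= (-q^2;q^2)_\infty\cdot\Bigg(\sum_{j\geqslant 0} q^{j(j+1)/2}\Bigg).
$$
After substituting $\sum_{j\geqslant 0} q^{j(j+1)/2} = (q^2;q^2)_\infty/(q;q^2)_\infty$ and $(-q^2;q^2)_\infty = (q^4;q^4)_\infty/(q^2;q^2)_\infty$, and using $(q;q)_\infty = (q;q^2)_\infty(q^2;q^2)_\infty$, the identity collapses to a clean evaluation of the alternating triangular-number theta series $\sum_{j\geqslant 0}(-1)^{j(j+1)/2}q^{j(j+1)/2}$ as an infinite product in $q$. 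I would verify that evaluation directly from the Jacobi triple product (it should come out to something like $(q^4;q^4)_\infty\,(-q;q^4)_\infty$ up to rearrangement — I would pin down the exact product by matching low-order terms and then citing JTP).

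The main obstacle I anticipate is precisely the closed-form evaluation of $\sum_{j\geqslant 0}(-1)^{j(j+1)/2}q^{j(j+1)/2}$: the sign pattern of period $4$ in $j$ means it is not one of the textbook single theta series, and one must either split it into two ordinary theta series according to the residue of $j$ modulo $4$ (pairing $j=4t$ with $j=4t+3$ and $j=4t+1$ with $j=4t+2$, say, to exploit cancellation) or recognize it as a Gauss-type identity. Once that product form is in hand, the rest is routine cancellation of $(q;q)_\infty$, $(q;q^2)_\infty$, $(q^2;q^2)_\infty$, $(q^4;q^4)_\infty$ factors. Alternatively, if a slick combinatorial proof is wanted, I would instead interpret the left side via the bijection $\vp$ of Section~\ref{s1}: $\sigma\,\mathrm{mex}(n-j(j+1)/2)$ counts $2$-colored distinct partitions with a controlled difference in the number of parts of each color, and the alternating sign suggests a sign-reversing involution (a Franklin-type argument, as in the proof of Lemma~\ref{L1}) whose fixed points biject with the distinct partitions of $(n-j(j+1)/2)/2$ counted on the right; but I expect the generating-function route to be shorter and cleaner, so that is the one I would write up.
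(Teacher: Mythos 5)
Your proposal takes essentially the same route as the paper: reduce \eqref{sumdist} to a product identity via generating functions, where the theta evaluation you flag as the main obstacle is precisely the classical identity $\sum_{j\geqslant 0}(-q)^{j(j+1)/2}=(q^2;q^2)_\infty/(-q;q^2)_\infty$ (Gauss's triangular-number series with $q\mapsto -q$), which the paper simply cites from \cite[p.~23, eq.~(2.2.13)]{Andrews98}. Note only that the sign pattern of $(-1)^{j(j+1)/2}$ for $j=0,1,2,3,\dots$ is $+,-,-,+,+,-,-,+,\dots$ rather than $+,+,-,-,\dots$, so the series is $1-q-q^3+q^6+q^{10}-\cdots$ and its product form is $(q;q^2)_\infty(q^4;q^4)_\infty$, not $(-q;q^4)_\infty(q^4;q^4)_\infty$ --- a slip that your planned low-order-term check against the Jacobi triple product would catch.
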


\begin{proof}
	Considering the classical theta identity \cite[p. 23, eq. (2.2.13)]{Andrews98}
	\begin{equation}\label{Eq:5}
	\frac{(q^2;q^2)_\infty}{(-q;q^2)_\infty} = \sum_{n=0}^\infty (-q)^{n(n+1)/2},	
	\end{equation}
	we can write
	\begin{align*}
	\left( \sum_{n=0}^\infty \sigma\, \mathrm{mex}(n) q^n \right) \left( \sum_{n=0}^\infty (-q)^{n(n+1)/2} \right) 
	& = \frac{(q^2;q^2)_\infty}{(q;q)_\infty (q;q^2)_\infty} \cdot \frac{(q^2;q^2)_\infty}{(-q;q^2)_\infty} \\
	& = (-q^2;q^2)_\infty \cdot \frac{(q^2;q^2)_\infty}{(q;q^2)_\infty} \\
	& = \left( \sum_{n=0}^\infty D_1(n) q^{2n} \right) \left( \sum_{n=0}^\infty q^{n(n+1)/2} \right) 
	\end{align*}
	and the proof follows by equating the coefficients of $q^n$ in this identity. 
\end{proof}

To obtain a combinatorial interpretation for the sum on the right hand side of \eqref{sumdist}, let $D_2^*(n)$ be the 
 number of partitions of $n$ with distinct parts using two colors such that: (i)  parts of color $0$ form a gap-free partition (staircase) and (ii)
only even parts can have color $1$. 
Then, we have the following identity of Watson type \cite{BM19a}. 

\begin{proposition}
	For $n\geqslant 0$,
	$$\sum_{j=0}^\infty D_1\left( \frac{n-j(j+1)/2}{2}\right)= D_2^*(n).$$	
\end{proposition}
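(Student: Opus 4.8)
The cleanest route is to give a combinatorial bijection realizing the generating function identity hidden in Theorem~\ref{T5}, namely
$$ \sum_{n\ge 0} D_2^*(n) q^n = \left(\sum_{n\ge 0} D_1(n) q^{2n}\right)\left(\sum_{n\ge 0} q^{n(n+1)/2}\right) = (-q^2;q^2)_\infty \cdot \frac{(q^2;q^2)_\infty}{(q;q^2)_\infty}. $$
Indeed, the right-hand side of the proposition is exactly the coefficient of $q^n$ in $(-q^2;q^2)_\infty \cdot \sum_{j\ge 0} q^{j(j+1)/2}$ (after reindexing $D_1$ by weight $2m$), so it suffices to show $\sum_{n\ge 0} D_2^*(n)q^n$ equals this product. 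First I would observe that a partition counted by $D_2^*(n)$ splits canonically into its color-$0$ part $\mu^{(0)}$, which is a staircase $\eta(k)=k+(k-1)+\cdots+1$ for some $k\ge 0$ (this is forced by conditions (i): distinct parts forming a gap-free partition), and its color-$1$ part $\mu^{(1)}$, which is a partition into distinct \emph{even} parts. The weight of $\mu^{(0)}$ is $k(k+1)/2$, a triangular number, and the weight of $\mu^{(1)}$ is $2m$ where $m$ is a partition into distinct parts (divide each part by $2$). Hence $D_2^*(n)$ counts pairs (triangular number $k(k+1)/2$, partition into distinct parts of total weight $(n-k(k+1)/2)/2$), which is precisely $\sum_{j\ge 0} D_1((n-j(j+1)/2)/2)$.

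\textbf{Key steps, in order.} (1)~State the bijection: a partition $\mu \in D_2^*(n)$ corresponds to the pair $(k, \lambda)$ where $\mu^{(0)} = \eta(k)$ and $\lambda$ is the partition obtained from $\mu^{(1)}$ by halving every part. (2)~Check that condition (i) on $\mu$ forces $\mu^{(0)}$ to be a staircase: distinct positive parts with no gaps below the maximum means $\mu^{(0)}$ consists of exactly $\{1,2,\dots,k\}$ for some $k \ge 0$ (with $k=0$ allowed, giving $\mu^{(0)} = \emptyset$). (3)~Check that condition (ii) — only even parts may have color $1$, and parts of a fixed color are distinct — makes $\mu^{(1)}$ an arbitrary partition into distinct even parts, so halving gives an arbitrary partition $\lambda$ into distinct parts of weight $m := |\mu^{(1)}|/2$. (4)~Conclude $|\mu| = k(k+1)/2 + 2m$, so summing over all valid $(k,\lambda)$ with $k(k+1)/2 + 2m = n$ yields $D_2^*(n) = \sum_{j\ge 0} D_1\!\big((n-j(j+1)/2)/2\big)$, matching the right-hand side of Theorem~\ref{T5} term-by-term. (5)~Invoke Theorem~\ref{T5} to finish, or equivalently just note that both sides equal the coefficient of $q^n$ in $(-q^2;q^2)_\infty \sum_{j\ge0}q^{j(j+1)/2}$.

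\textbf{Anticipated obstacle.} There is essentially no deep obstacle here; the proposition is a direct unpacking of the definition of $D_2^*(n)$. The one point requiring a little care is the edge case $k=0$ in step (2) — one must confirm that the definition of $D_2^*$ permits $\mu$ to have no parts of color $0$ (an empty staircase), which it does since ``distinct parts forming a staircase'' vacuously holds for the empty partition; this matches the $j=0$ term $D_1(n/2)$ on the right. A secondary cosmetic point is to make sure the convention $D_1(x)=0$ for non-integer or non-positive $x$ is used consistently so that $\mu^{(1)}=\emptyset$ (contributing $D_1(0)=1$, the empty partition) is handled correctly and the outer sum over $j$ is genuinely finite. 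Once these conventions are pinned down, the bijection and the weight bookkeeping are immediate, and combining with the chain of identities in the proof of Theorem~\ref{T5} completes the argument.
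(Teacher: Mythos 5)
Your argument is correct and is essentially the same bijection the paper uses, just read in the opposite direction: the paper starts from a partition counted by $D_1\big((n-j(j+1)/2)/2\big)$, doubles its parts, colors them $1$, and adjoins the staircase $1,2,\dots,j$ in color $0$, while you start from a partition counted by $D_2^*(n)$ and peel off the staircase and halve the even color-$1$ parts. The weight bookkeeping and edge-case remarks (empty staircase, $D_1(x)=0$ for non-integers) match the paper's intent, so nothing further is needed.
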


\begin{proof}
To see this, let $\l$ be a partition counted by $\ds D_1\left(\frac{n-j(j+1)/2}{2}\right)$. Double the size of each part of $\l$ to obtain a partition $\mu$ of $n-j(j+1)/2$ whose parts are even and distinct. Color the parts of $\mu$ with color $1$ and add parts $1, 2, \ldots, j$ in color $0$ to obtain a partition counted by $D_2^*(n)$. This transformation is clearly reversible. 	
\end{proof}

In \cite{AM18}, the authors denoted by  $MP_k(n)$ 
the number of partitions of $n$ in which the first part larger than $2k-1$	is odd and appears exactly $k$ times.
All other odd parts appear at most once. For example, $MP_2(19)=10$, and the partitions in question are
$9+9+1$,
$9+5+5$,
$8+5+5+1$,
$7+7+3+2$,
$7+7+2+2+1$,
$7+5+5+2$,
$6+5+5+3$,
$6+5+5+2+1$,
$5+5+3+2+2+2$,
$5+5+2+2+2+2+1$.

We remark the following truncated form of Theorem \ref{T5}.

\begin{theorem}\label{T6}
	For integers $n,k>0$,
	\begin{align*}
	& (-1)^{k-1} \left( \sum_{j=0}^{2k-1} (-1)^{j(j+1)/2} \sigma\, \mathrm{mex}\big(n-j(j+1)/2 \big) -D^*_2(n) \right) \\
	& = \sum_{j=0}^n MP_k(j) D^*_2(n-j).
	\end{align*}
\end{theorem}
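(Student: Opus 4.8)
The plan is to follow the same template as the proof of Theorem \ref{T3}: start from a known truncated theta-type identity from \cite{AM18} (or \cite{BM19a}) that refines the theta identity \eqref{Eq:5}, multiply through by the generating series $\sum_{n\geqslant 0} q^{n(n+1)/2}$, and identify the resulting products with the generating functions appearing in the claimed identity. Concretely, I would first record the generating function for $MP_k(n)$ from \cite{AM18}, say $\sum_{n\geqslant 0} MP_k(n) q^n = F_k(q)$, together with a truncated form of \eqref{Eq:5} of the shape
\begin{equation*}
\frac{(q^2;q^2)_\infty}{(-q;q^2)_\infty}\Bigg(\sum_{j=0}^{2k-1}(-q)^{j(j+1)/2}\Bigg)^{-1}\cdots
\end{equation*}
— more precisely, the identity from \cite{AM18} should read
\begin{equation*}
\frac{(q^2;q^2)_\infty}{(-q;q^2)_\infty}\Bigg(\sum_{j=0}^{2k-1}(-1)^{j(j+1)/2}q^{j(j+1)/2}\Bigg) \;=\; (-1)^{k-1}\Big(1 - (-q^2;q^2)_\infty\,\tfrac{(q^2;q^2)_\infty}{(q;q^2)_\infty}\,(\text{something})\Big),
\end{equation*}
so that the right-hand tail is exactly $\frac{(q^2;q^2)_\infty}{(-q;q^2)_\infty}\,F_k(q)$ up to the sign $(-1)^{k-1}$ and the isolated term giving $D_2^*(n)$.

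Next I would assemble the pieces. Multiplying the truncated identity by $\sum_{n\geqslant 0} q^{n(n+1)/2} = \frac{(q^2;q^2)_\infty}{(q;q^2)_\infty}$ and using the generating function $\sum_{n\geqslant 0}\sigma\,\mathrm{mex}(n)q^n = \frac{(q^2;q^2)_\infty}{(q;q)_\infty(q;q^2)_\infty}$ converts the left-hand side of the truncated identity into
\begin{equation*}
(-1)^{k-1}\Bigg(\sum_{n=0}^\infty \sigma\,\mathrm{mex}(n)q^n\Bigg)\Bigg(\sum_{j=0}^{2k-1}(-1)^{j(j+1)/2}q^{j(j+1)/2}\Bigg)
\end{equation*}
whose $q^n$-coefficient is the summand $(-1)^{k-1}\sum_{j=0}^{2k-1}(-1)^{j(j+1)/2}\sigma\,\mathrm{mex}(n-j(j+1)/2)$ of Theorem \ref{T6}. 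On the right-hand side, the isolated term produces $(-1)^{k-1}$ times $\frac{(q^2;q^2)_\infty}{(q;q^2)_\infty}\cdot(-q^2;q^2)_\infty \cdot \frac{(q^2;q^2)_\infty}{(q;q^2)_\infty}$—wait, I should instead use the Proposition just proved, namely that $\sum_n D_2^*(n)q^n$ equals $\frac{(q^2;q^2)_\infty}{(q;q^2)_\infty}\sum_j (-q)^{j(j+1)/2}\cdot(\text{doubling factor})$; cleanly, $\sum_n D_2^*(n)q^n = (-q^2;q^2)_\infty\cdot\frac{(q^2;q^2)_\infty}{(q;q^2)_\infty}\Big/\big(\text{the staircase generating factor}\big)$, which by the computation in the proof of Theorem \ref{T5} equals $\big(\sum_n \sigma\,\mathrm{mex}(n)q^n\big)\big(\sum_n (-q)^{n(n+1)/2}\big)$ truncated-free. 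This gives the $D_2^*(n)$ term, and the remaining tail gives $\big(\sum_n MP_k(j)q^j\big)\big(\sum_n D_2^*(n)q^n\big)$ by the generating function for $MP_k$, whose $q^n$-coefficient is $\sum_{j=0}^n MP_k(j)D_2^*(n-j)$. Equating coefficients of $q^n$ throughout yields the theorem.

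The main obstacle I anticipate is locating and correctly transcribing the precise truncated identity from \cite{AM18}/\cite{BM19a} and verifying that, after multiplication by $\sum q^{n(n+1)/2}$, the isolated ``$1$'' on the right-hand side really does turn into the generating function for $D_2^*(n)$ with exactly the sign $(-1)^{k-1}$ and no leftover factors; this is the analogue of the $\delta'(n)$ bookkeeping in Theorem \ref{T3}, and getting the signs $(-1)^{j(j+1)/2}$ to match between the theta truncation and the $\sigma\,\mathrm{mex}$ convolution is where an error is most likely to creep in. A secondary point worth care is confirming that $\frac{(q^2;q^2)_\infty}{(q;q^2)_\infty} = \sum_{j\geqslant 0} q^{j(j+1)/2}$ is the right series to multiply by (it is, being the standard generating function identity already used twice in the paper) and that the factor $\frac{(q^2;q^2)_\infty}{(q;q)_\infty(q;q^2)_\infty}$ for $\sigma\,\mathrm{mex}$ telescopes against it correctly. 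Once the generating-function skeleton is in place, equating coefficients is routine and the combinatorial content is entirely carried by the already-established interpretations of $MP_k$ and $D_2^*$.
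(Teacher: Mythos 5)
Your skeleton is the paper's: invoke the truncated form of \eqref{Eq:5} from \cite[Theorem 9]{AM18}, whose right-hand side is exactly $1+(-1)^{k-1}\sum_{n}MP_k(n)q^n$, multiply through by a suitable series, and equate coefficients of $q^n$. However, two of your concrete choices are wrong, and with them the computation does not close. First, the prefactor in the \cite{AM18} identity is $\frac{(-q;q^2)_\infty}{(q^2;q^2)_\infty}$ (the $\mathrm{pod}$ generating function \eqref{gfpod}), not its reciprocal $\frac{(q^2;q^2)_\infty}{(-q;q^2)_\infty}$ as you transcribed it; and the isolated term on the right is $1$, not $(-1)^{k-1}$. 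Second, and more seriously, the correct multiplier is not $\sum_{n\geqslant 0}q^{n(n+1)/2}=\frac{(q^2;q^2)_\infty}{(q;q^2)_\infty}$ but the full generating function for $D_2^*(n)$, namely $\frac{(q^2;q^2)_\infty}{(q;q)_\infty(q;q^2)_\infty}\cdot\frac{(q^2;q^2)_\infty}{(-q;q^2)_\infty}=(-q^2;q^2)_\infty\,\frac{(q^2;q^2)_\infty}{(q;q^2)_\infty}$; your proposed multiplier is missing the factor $(-q^2;q^2)_\infty$. As written, multiplying the isolated $1$ by $\sum q^{n(n+1)/2}$ produces the indicator of triangular numbers, not $D_2^*(n)$, and multiplying your (mis-transcribed) left-hand prefactor by $\frac{(q^2;q^2)_\infty}{(q;q^2)_\infty}$ gives $(q^2;q^2)_\infty(q^4;q^4)_\infty$ rather than the $\sigma\,\mathrm{mex}$ generating function $\frac{1}{(q;q^2)_\infty^2}$, so neither side of your identity matches the theorem.

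With the two corrections the argument is exactly the paper's: the $\mathrm{pod}$ prefactor $\frac{(-q;q^2)_\infty}{(q^2;q^2)_\infty}$ cancels against the second factor of the multiplier, leaving $\frac{(q^2;q^2)_\infty}{(q;q)_\infty(q;q^2)_\infty}\sum_{j=0}^{2k-1}(-q)^{j(j+1)/2}$ on the left, whose $q^n$-coefficient is the truncated $\sigma\,\mathrm{mex}$ sum; the isolated $1$ becomes $\sum_n D_2^*(n)q^n$ by the computation in the proof of Theorem \ref{T5} together with the Proposition preceding Theorem \ref{T6}; and the tail becomes $(-1)^{k-1}\bigl(\sum_n MP_k(n)q^n\bigr)\bigl(\sum_n D_2^*(n)q^n\bigr)$. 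Your mid-paragraph self-correction shows you sensed the right target (the $1$ must map to the $D_2^*$ generating function), but you never actually replaced the multiplier, so the proposal as stated contains a genuine gap rather than a complete proof.
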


\begin{proof}
	According to \cite[Theorem 9]{AM18}, we have
	\begin{align}
	& \frac{(-q;q^2)_\infty}{(q^2;q^2)_\infty} \sum_{j=0}^{2k-1}(-q)^{j(j+1)/2} \label{eq:1.13} \\
	& \qquad = 1 + (-1)^{k-1} \frac{(-q;q^2)_k}{(q^2;q^2)_{k-1}} 
	\sum_{j=0}^{\infty} \frac{q^{k(2j+2k+1)}(-q^{2j+2k+3};q^2)_{\infty}}{(q^{2k+2j+2};q^2)_{\infty}},\notag
	\end{align}
	where
	$$
	\sum_{n=0}^\infty MP_k(n) q^n = \frac{(-q;q^2)_k}{(q^2;q^2)_{k-1}} 
	\sum_{j=0}^{\infty} \frac{q^{k(2j+2k+1)}(-q^{2j+2k+3};q^2)_{\infty}}{(q^{2k+2j+2};q^2)_{\infty}}.
	$$
	The proof follows easily by multiplying both sides of \eqref{eq:1.13} by
	$$\frac{(q^2;q^2)_\infty}{(q;q)_\infty (q;q^2)_\infty} \cdot \frac{(q^2;q^2)_\infty}{(-q;q^2)_\infty}.$$
\end{proof}

A further interesting corollary of Theorem \ref{T6} relates to $\sigma\, \mathrm{mex}(n)$.

\begin{corollary}
		For integers $n,k>0$,
	\begin{align*}
	(-1)^{k-1} \left( \sum_{j=0}^{2k-1} (-1)^{j(j+1)/2} \sigma\, \mathrm{mex}\big(n-j(j+1)/2 \big) -D^*_2(n) \right) \geqslant 0,
	\end{align*}
	with strict inequality if $n\geqslant k(2k+1)$.
\end{corollary}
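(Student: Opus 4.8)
The plan is to derive this corollary directly from Theorem \ref{T6} in exactly the same way that the earlier corollaries follow from their respective theorems: the left-hand side of the inequality is precisely the left-hand side of the identity in Theorem \ref{T6}, so it suffices to show that the right-hand side, namely $\sum_{j=0}^n MP_k(j) D^*_2(n-j)$, is non-negative in general and strictly positive once $n \geqslant k(2k+1)$. Both $MP_k(j)$ and $D^*_2(m)$ are counting functions, hence non-negative integers, so each summand $MP_k(j)D^*_2(n-j)$ is $\geqslant 0$ and the sum is $\geqslant 0$. This gives the weak inequality immediately.

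For the strict inequality, I would exhibit at least one index $j$ in the range $0 \leqslant j \leqslant n$ with $MP_k(j) > 0$ and $D^*_2(n-j) > 0$. The natural candidate is $j = k(2k+1)$, which is the smallest integer $n$ for which $MP_k(n) \neq 0$: indeed, the generating function $\sum MP_k(n)q^n = \frac{(-q;q^2)_k}{(q^2;q^2)_{k-1}}\sum_{j\geqslant 0} \frac{q^{k(2j+2k+1)}(-q^{2j+2k+3};q^2)_\infty}{(q^{2k+2j+2};q^2)_\infty}$ has lowest-order term $q^{k(2k+1)}$ (take $j=0$ and the constant terms of all the infinite products), so $MP_k(k(2k+1)) = 1 > 0$. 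Concretely this is the partition $(2k-1) + (2k-1) + \cdots + (2k-1)$ with $k$ copies of $2k-1$, whose weight is $k(2k+1)$ — wait, $k(2k-1) \neq k(2k+1)$, so I should instead read off the genuine minimal partition from the product expansion, e.g. $(2k+1)$ repeated $k$ times has weight $k(2k+1)$ and is a valid $MP_k$-partition (first part exceeding $2k-1$ is $2k+1$, odd, appearing exactly $k$ times, no other odd parts); this confirms $MP_k(k(2k+1)) \geqslant 1$. Then for $n \geqslant k(2k+1)$ we need $D^*_2(n - k(2k+1)) > 0$, i.e. $D^*_2(m) > 0$ for every $m \geqslant 0$. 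Since $D^*_2$ counts two-colored distinct-part partitions whose color-$0$ part forms a staircase and whose color-$1$ parts are even, the single part $m$ colored $0$ is always a valid such partition (a one-row staircase), so $D^*_2(m) \geqslant 1$ for all $m \geqslant 0$, including $m=0$ where the empty partition is counted. Hence $MP_k(k(2k+1)) D^*_2(n - k(2k+1)) \geqslant 1$, making the sum strictly positive.

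I expect no serious obstacle here; the only point requiring a little care is pinning down the smallest support point of $MP_k$ and checking $D^*_2$ never vanishes, both of which are short reads off the generating function / definition. Thus the proof is:

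\begin{proof}
	By Theorem \ref{T6}, the left-hand side of the claimed inequality equals $\sum_{j=0}^n MP_k(j)\, D^*_2(n-j)$. Each factor $MP_k(j)$ and $D^*_2(n-j)$ is a non-negative integer, being the cardinality of a set of partitions, so the sum is non-negative. For the strict inequality, assume $n \geqslant k(2k+1)$. Reading the generating function for $MP_k$ given in the proof of Theorem \ref{T6}, its term of lowest degree is $q^{k(2k+1)}$, corresponding to the partition consisting of $k$ copies of $2k+1$; hence $MP_k\big(k(2k+1)\big) \geqslant 1$. Moreover $D^*_2(m) \geqslant 1$ for every $m \geqslant 0$: the partition with the single part $m$ of color $0$ (for $m>0$), or the empty partition (for $m=0$), satisfies conditions (i) and (ii) in the definition of $D^*_2$. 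Taking $j = k(2k+1)$, which lies in the summation range, gives $MP_k(j)\, D^*_2(n-j) \geqslant 1$, so $\sum_{j=0}^n MP_k(j)\, D^*_2(n-j) > 0$.
\end{proof}
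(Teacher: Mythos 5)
Your overall strategy is exactly the one the paper intends (it states this corollary with no written proof, treating it as immediate from Theorem \ref{T6} plus positivity of the convolution $\sum_j MP_k(j)D_2^*(n-j)$), and your identification of the minimal support point of $MP_k$ is correct: $k$ copies of $2k+1$ is a valid $MP_k$-partition of $k(2k+1)$, and the generating function confirms $MP_k\big(k(2k+1)\big)=1$.

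There is, however, one genuine error: your witness for $D_2^*(m)\geqslant 1$. Condition (i) in the definition of $D_2^*$ requires the color-$0$ parts to form a gap-free partition, i.e.\ a staircase $\eta(j)=j+(j-1)+\cdots+1$ (this is forced by the bijection in the paper's Proposition, where the color-$0$ parts are exactly $1,2,\ldots,j$). A single part $m$ of color $0$ is therefore \emph{not} an admissible partition unless $m\in\{0,1\}$; for instance $5_0$ is not counted by $D_2^*(5)$. The fact $D_2^*(m)\geqslant 1$ for all $m\geqslant 0$ is still true, but it needs a parity argument: one must find $j\geqslant 0$ with $j(j+1)/2\leqslant m$ and $j(j+1)/2\equiv m\pmod 2$, and then take the staircase $\eta(j)$ in color $0$ together with the single even part $m-j(j+1)/2$ (if nonzero) in color $1$. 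Concretely, take $j=0$ when $m$ is even and $j=1$ when $m$ is odd (so the witness is $m_1$ for even $m>0$, and $1_0+(m-1)_1$ for odd $m\geqslant 3$, with $1_0$ for $m=1$ and the empty partition for $m=0$). With that substitution your proof is complete and matches the paper's intended argument.
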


On the other hand, the reciprocal of the infinite product in \eqref{Eq:5} is the generating function for $\mathrm{pod}(n)$, the number of partitions of $n$ in which odd parts are not repeated, i.e.,
\begin{equation}\label{gfpod}
\frac{(-q;q^2)_\infty}{(q^2;q^2)_\infty} = \sum_{n=0}^\infty \mathrm{pod}(n) q^n.
\end{equation}
The properties of the partition function $\mathrm{pod}(n)$ were studied in \cite{Hirschhorn} by Hirschhorn and Sellers. We easily deduce the following convolution identity.

\begin{corollary}
	For $n\geqslant 0$,
	$$ 
	\sigma\, \mathrm{mex}(n) = \sum_{j=0}^n  \mathrm{pod}(j) D^*_2(n-j).
	$$	
\end{corollary}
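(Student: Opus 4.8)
The final statement is the corollary
$$\sigma\,\mathrm{mex}(n)=\sum_{j=0}^n \mathrm{pod}(j)\,D_2^*(n-j).$$

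The plan is to read this off at the level of generating functions, exactly as the preceding corollaries in Section \ref{dist} are proved. First I would recall the three ingredients already assembled in this section: the generating function $\sum_{n\geqslant 0}\sigma\,\mathrm{mex}(n)q^n=\frac{(q^2;q^2)_\infty}{(q;q)_\infty(q;q^2)_\infty}$; the identity $\sum_{n\geqslant 0}D_2^*(n)q^n=(-q^2;q^2)_\infty\cdot\frac{(q^2;q^2)_\infty}{(q;q^2)_\infty}$, which is implicit in the proof of Theorem \ref{T5} (there the product $\left(\sum D_1(n)q^{2n}\right)\left(\sum q^{n(n+1)/2}\right)$ is shown to equal $(-q^2;q^2)_\infty\cdot\frac{(q^2;q^2)_\infty}{(q;q^2)_\infty}$) together with the Proposition identifying $\sum_j D_1((n-j(j+1)/2)/2)$ with $D_2^*(n)$; and the generating function \eqref{gfpod}, $\sum_{n\geqslant 0}\mathrm{pod}(n)q^n=\frac{(-q;q^2)_\infty}{(q^2;q^2)_\infty}$.

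Then I would simply multiply the last two:
$$\left(\sum_{n\geqslant 0}\mathrm{pod}(n)q^n\right)\left(\sum_{n\geqslant 0}D_2^*(n)q^n\right)=\frac{(-q;q^2)_\infty}{(q^2;q^2)_\infty}\cdot(-q^2;q^2)_\infty\cdot\frac{(q^2;q^2)_\infty}{(q;q^2)_\infty}=\frac{(-q;q^2)_\infty(-q^2;q^2)_\infty}{(q;q^2)_\infty}.$$
The numerator $(-q;q^2)_\infty(-q^2;q^2)_\infty=(-q;q)_\infty$, so the right-hand side collapses to $\frac{(-q;q)_\infty}{(q;q^2)_\infty}$. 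Finally I would check that $\frac{(-q;q)_\infty}{(q;q^2)_\infty}=\frac{(q^2;q^2)_\infty}{(q;q)_\infty(q;q^2)_\infty}$, i.e. that $(-q;q)_\infty=\frac{(q^2;q^2)_\infty}{(q;q)_\infty}$, which is Euler's classical identity $\prod(1+q^k)=\prod\frac1{1-q^{2k-1}}=\frac{(q^2;q^2)_\infty}{(q;q)_\infty}$. Equating coefficients of $q^n$ yields $\sigma\,\mathrm{mex}(n)=\sum_{j=0}^n\mathrm{pod}(j)D_2^*(n-j)$.

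There is no real obstacle here; the only thing requiring a moment of care is bookkeeping with the $q$-Pochhammer symbols — in particular invoking $(-q;q^2)_\infty(-q^2;q^2)_\infty=(-q;q)_\infty$ and $(-q;q)_\infty=(q^2;q^2)_\infty/(q;q)_\infty$ correctly — and making sure the $D_2^*$ generating function is stated explicitly (extracting it cleanly from the computation inside the proof of Theorem \ref{T5}). Since \eqref{gfpod} is already displayed and the statement is flagged as an immediate consequence, the write-up should be two or three lines: record the product of \eqref{gfpod} with the $D_2^*$ generating function, simplify using Euler's identity, and equate coefficients.
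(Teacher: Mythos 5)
Your proposal is correct and is essentially the computation the paper has in mind: the paper deduces the convolution by noting that the $\mathrm{pod}$ generating function \eqref{gfpod} is the reciprocal of the theta product in \eqref{Eq:5}, so multiplying the identity established in the proof of Theorem \ref{T5} (whose right-hand side is the generating function for $D_2^*$) by that reciprocal immediately yields $\sum\sigma\,\mathrm{mex}(n)q^n=\bigl(\sum\mathrm{pod}(n)q^n\bigr)\bigl(\sum D_2^*(n)q^n\bigr)$. Your version performs the same manipulation in the other direction — multiplying the two generating functions and collapsing the product via $(-q;q^2)_\infty(-q^2;q^2)_\infty=(-q;q)_\infty=(q^2;q^2)_\infty/(q;q)_\infty$ — and all the $q$-Pochhammer bookkeeping checks out.
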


Finally, we remark that  finding a combinatorial interpretation for 
$$\sum_{j=0}^n  MP_k(j) D^*_2(n-j) $$
would be very desirable.

\section{Concluding remarks}

The present work began with the search for a combinatorial proof of Theorem \ref{T1}. We were further able to prove several truncated series formulas involving the function $\sigma \, \mex$. In \cite{BM19}, we worked with the generalization of this function: the sum, $S_r(n)$, of $r$-gaps in all partitions of $n$. To keep notation uniform, we use $\smex_r(n)$ for $S_r(n)$. Recall that the $r$-gap of a partition $\l$ is the least positive integer that does not appear at least $r$ times as a part of $\l$. In \cite{BM19}, we proved combinatorially that \begin{equation}\smex_r(n)=\sum_{j \geqslant 0}p(n-rj(j+1)/2), \end{equation} 
and we gave the generating function for $\sigma_r \, \mex(n)$, namely \begin{equation}\label{sr}\sum_{n\geq 0} \sigma_r \, \mex(n)q^n=\frac{(q^{2r};q^{2r})_\infty}{(q;q)_\infty (q^r;q^{2r})_\infty}.\end{equation} 
Denote by $\widetilde D_2^{(r)}(n) $  the number of partitions $\l$ of $n$  using two colors, $0$ and $1$, such that: 
\begin{enumerate}
\item[(i)] $\l^{(0)}$ is a partition into distinct parts divisible by $r$. 
\item[(ii)]  $\l^{(1)}$ is a partition with parts repeated at most $2r-1$ times.
\end{enumerate}

The following generalization of Theorem \ref{T1} is immediate from \eqref{sr}. 

\begin{theorem} \label{TL}
 Let $n,r$ be integers with $r>0$ and $n\geq 0$. Then $\sigma_r \, \mex(n)=\widetilde D_2^{(r)}(n).$ \end{theorem}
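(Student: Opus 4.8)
The plan is to read off Theorem \ref{TL} directly from the generating function \eqref{sr}, exactly as Theorem \ref{T1} follows from the case $r=1$. First I would rewrite the right-hand side of \eqref{sr} as a product of three factors:
$$\frac{(q^{2r};q^{2r})_\infty}{(q;q)_\infty (q^r;q^{2r})_\infty} = \frac{1}{(q;q)_\infty}\cdot\frac{(q^{2r};q^{2r})_\infty}{(q^r;q^{2r})_\infty}\cdot\frac{1}{1}.$$
Actually the cleaner grouping is to match the two color classes: the factor $(-q^r;q^r)_\infty = \frac{(q^{2r};q^{2r})_\infty}{(q^r;q^r)_\infty}$ is the generating function for partitions into distinct parts divisible by $r$, which is condition (i) on $\l^{(0)}$; and the remaining factor must be $\frac{(q^r;q^r)_\infty}{(q;q)_\infty(q^r;q^{2r})_\infty}$. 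So the identity to verify is the purely formal manipulation
$$\frac{(q^{2r};q^{2r})_\infty}{(q;q)_\infty (q^r;q^{2r})_\infty} = (-q^r;q^r)_\infty \cdot \frac{(q^r;q^r)_\infty}{(q;q)_\infty(q^r;q^{2r})_\infty},$$
which is immediate, and then to recognize the second factor as $\sum_n \widetilde q_2(n) q^n$ where $\widetilde q_2(n)$ counts partitions with parts repeated at most $2r-1$ times. The last recognition is the classical identity: a partition with each part appearing at most $2r-1$ times has generating function $\prod_{m\geq 1}\frac{1-q^{2rm}}{1-q^m} = \frac{(q^{2r};q^{2r})_\infty}{(q;q)_\infty}$; one checks this equals $\frac{(q^r;q^r)_\infty}{(q;q)_\infty(q^r;q^{2r})_\infty}$ by writing $(q^{2r};q^{2r})_\infty = (q^r;q^r)_\infty (-q^r;q^r)_\infty$ and $(-q^r;q^r)_\infty = 1/(q^r;q^{2r})_\infty$. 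Wait—that last equality is false in general; rather $(-q^r;q^r)_\infty (q^r;q^{2r})_\infty = 1$ is the identity "distinct parts = odd parts" applied in base $q^r$, so indeed $(-q^r;q^r)_\infty = 1/(q^r;q^{2r})_\infty$ after all. Good; with that, $\frac{(q^{2r};q^{2r})_\infty}{(q;q)_\infty} = \frac{(q^r;q^r)_\infty(-q^r;q^r)_\infty}{(q;q)_\infty} = \frac{(q^r;q^r)_\infty}{(q;q)_\infty(q^r;q^{2r})_\infty}$, as needed.

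So the steps, in order, are: (1) start from \eqref{sr}; (2) factor the right side as (generating function for partitions into distinct parts divisible by $r$) times (generating function for partitions with parts repeated at most $2r-1$ times), justifying each recognition by the two base-$q^r$ product identities above; (3) since a two-colored partition $\l$ counted by $\widetilde D_2^{(r)}(n)$ is precisely a choice of a $\l^{(0)}$ satisfying (i) together with a $\l^{(1)}$ satisfying (ii) with $|\l^{(0)}| + |\l^{(1)}| = n$, the product of the two generating functions is $\sum_n \widetilde D_2^{(r)}(n) q^n$; (4) equate coefficients of $q^n$ with $\sum_n \sigma_r\,\mex(n) q^n$. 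Since \eqref{sr} is already proved in \cite{BM19} and quoted in the excerpt, nothing further is required.

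I do not expect a serious obstacle here: the theorem is labelled "immediate from \eqref{sr}" in the excerpt, and the whole content is the elementary product rearrangement plus the standard Euler-type identities in base $q^r$. The only point that needs a line of care is the identity $(q^{2r};q^{2r})_\infty/(q;q)_\infty = \sum_n \widetilde q_2(n) q^n$ for partitions with multiplicities bounded by $2r-1$, which follows from $\prod_m (1 + q^m + \cdots + q^{(2r-1)m}) = \prod_m \frac{1-q^{2rm}}{1-q^m}$; everything else is bookkeeping. For completeness one could also give the one-line bijective gloss: the generating-function split corresponds to the obvious partition of the colored diagram into its color-$0$ and color-$1$ sub-partitions, so the identity is in fact combinatorial once the two classical base-$q^r$ identities are granted. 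If a bijective proof matching the style of Section \ref{s1} were wanted, one would compose the $\vp$ of Theorem \ref{T1} (in its $r$-gap form from \cite{BM19}, where $\sigma_r\,\mex(n) = \sum_j p(n - r j(j+1)/2)$) with the scaling-by-$r$ map on the staircase pieces; but for the stated theorem the generating-function argument suffices.
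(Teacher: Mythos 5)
Your proof is correct, and all the product manipulations check out: $(-q^r;q^r)_\infty = 1/(q^r;q^{2r})_\infty$ and $(q^r;q^r)_\infty = (q^r;q^{2r})_\infty(q^{2r};q^{2r})_\infty$ are exactly the identities needed to split the right side of \eqref{sr} as $(-q^r;q^r)_\infty \cdot (q^{2r};q^{2r})_\infty/(q;q)_\infty$, i.e.\ as the product of the generating functions for conditions (i) and (ii). This is precisely the argument the paper alludes to when it says the theorem is ``immediate from \eqref{sr}'', but it is not the proof the paper actually writes out: consistent with its stated goal of giving combinatorial proofs, the paper instead constructs an explicit bijection $\xi$ from $\bigcup_{j\geq 0}\P(n-rj(j+1)/2)$ to the set counted by $\widetilde D_2^{(r)}(n)$, obtained by splitting a partition into its parts divisible by $r$ (to which it applies the map $\vp$ of Theorem \ref{T1} after dividing by $r$, then rescales) and its parts not divisible by $r$ (to which it applies Glaisher's bijection into parts of multiplicity at most $r-1$, colored $1$). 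Your generating-function route is shorter and fully rigorous given \eqref{sr}; the paper's route buys an explicit correspondence. Your closing sketch of how one would build such a bijection captures the rescaling of the $\vp$-image but omits the Glaisher step needed to absorb the parts of $\l$ not divisible by $r$ into the color-$1$ class with multiplicities at most $2r-1$; since you offered that only as an optional gloss, it does not affect the validity of the proof you actually gave.
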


%It would be interesting to have a combinatorial proof of Theorem \ref{TL}. An approach similar to the proof of section \ref{s1} using $r$-modular diagrams might be possible but we were unable to find a combinatorial proof. 

\begin{proof}[Combinatorial proof of Theorem \ref{TL}]

Let $\widetilde \D_2^{(r)}(n)$ be the set of partitions of $n$ counted by $\widetilde D_2^{(r)}(n)$ described above. 
Let $\P_{r}(n)$ be the set of partitions of $n$ in which all parts are divisible by $r$. Let $\overline \P_{r}(n)$ be the set of partitions of $n$ in which all parts are not divisible by $r$. Finally, let $\Q_r(n)$ be the set of partitions of $n$ with parts repeated at most $r-1$ times.

Let $\psi$ denote Glaisher's bijection from $\overline \P_r(n)$ to $\Q_r(n)$. 

We create a bijection $$\xi: \bigcup_{j \geqslant 0}\P(n-rj(j+1)/2)\to  \widetilde \D_2^{(r)}(n). $$

Start with a partition $\l \in \P(n-rj(j+1)/2)$ for some $j \geqslant 0$. Let $\tilde \l$ be the partition consisting of the parts of $\l$ that are divisible by $r$ and $\bar \l$ be the partition consisting of the remaining parts of $\l$. Thus all parts of $\bar \l$ are not divisible by $r$. 

Let $\tilde \l_{/r}$ be the partition obtained from $\tilde \l$ by dividing each part by $r$. To $\tilde \l_{/r}\in\P\left(\frac{n-|\bar \l|}{r}-\frac{j(j+1)}{2}\right)$ we apply  the bijection $\vp$ from the combinatorial proof of Theorem \ref{T1} in section \ref{combAN}. (The appended rotated staircase is $\eta(j)$.) Then $\vp( \tilde \l_{/r})\in \D_2\left(\frac{n-|\bar\l|}{r}\right)$.  In $\vp( \tilde \l_{/r})$,  multiply each part of color $0$ by $r$ and repeat each part of color $1$   exactly $r$ times. These parts, together with the parts of $\psi(\bar \l)$ colored $1$, form  the partition $\xi(\l)\in  \widetilde \D_2^{(r)}(n)$.

\bigskip

Conversely, let $\mu\in \widetilde \D_2^{(r)}(n)$. Then $\mu^{(0)}$ is a partition with distinct parts all of which are multiples of $r$ and $\mu^{(1)}$ is a partition with parts repeated at most $2r-1$ times. We write $\mu^{(1)}$ as $\mu^{(1)}=\a^{(1)}\cup \b^{(1)}$, where all parts of $\a^{(1)}$ have multiplicity exactly $r$ and all parts of $\b^{(1)}$ have multiplicity at most $r-1$. Then $\psi^{-1}(\b^{(1)})$ has no part divisible by $r$. We have $|\mu^{(0)}|=rt_1$, $|\a^{(1)}|=rt_2$ and $|\b^{(1)}|=n-rt_1-rt_2$ for some non-negative integers $t_1$ and $t_2$. 

Let $\mu^{(0)}_{/r}$ be the partition with parts colored $0$ obtained from $\mu^{(0)}$ by dividing each part by $r$. Then, $\mu^{(0)}_{/r}$ is a partition with distinct parts colored $0$. Let $\a^{(1)}_{\backslash r}$ be the partition with distinct parts colored $1$ with exactly the same set of parts as $\a^{(1)}$. We then apply $\vp^{-1}$ to  $\mu^{(0)}_{/r}\cup \a^{(1)}_{\backslash r} \in \D_2(t_1+t_2)$ to obtain $\vp^{-1}(\mu^{(0)}_{/r}\cup \a^{(1)}_{\backslash r}) \in \P\left(t_1+t_2-\frac{j(j+1)}{2}\right)$ for some non-negative integer $j$. We multiply each part of $\vp^{-1}(\mu^{(0)}_{/r}\cup \a^{(1)}_{\backslash r})$ by $r$. These parts, together with the parts of $\psi^{-1}(\b^{(1)})$, form the partition $\xi^{-1}(\mu)\in \P(n-rj(j+1)/2)$.

\end{proof}

\begin{example} Let $n=167, r=3, j=3$ and consider $$\l=21+21+19+18+18+12+8+8+8+8+6+1+1 \in \P(167-3\cdot6)=\P(149).$$ Then, $$\tilde \l=21+21+18+18+12+6\in \P_3(96),$$  $$\bar \l=19+8+8+8+8+1+1\in \overline\P_3(53),$$ and $$\tilde \l_{/3}=7+7+6+6+4+2\in \P(32).$$ Applying Glaisher's bijection, we have $\psi(\bar \l)=24+19+8+1+1\in \Q_3(53)$.  From Example \ref{eg1}, we have $\vp(\tilde \l_{/3})=9_1+8_1+6_1+5_1+3_1+3_0+2_1+2_0$. Now, we multiply parts of color $0$ by $3$, repeat each part of color $1$ three times, and include the parts of $\psi(\bar \l)$ with color $1$ to obtain \begin{align*}\xi(\l)=& 24_1+19_1+9_1+9_1+9_1+9_0+8_1+8_1+8_1+8_1+6_1+6_1+6_1+\\ &6_0+5_1+5_1+5_1+ 3_1+3_1+3_1+2_1+2_1+2_1+1_1+1_1\in  \widetilde \D_2^{(3)}(167).\end{align*}

Conversely, let \begin{align*}\mu=& 24_1+19_1+9_1+9_1+9_1+9_0+8_1+8_1+8_1+8_1+6_1+6_1+6_1+\\ &6_0+5_1+5_1+5_1+ 3_1+3_1+3_1+2_1+2_1+2_1+1_1+1_1\in  \widetilde \D_2^{(3)}(167).\end{align*}

Then, we have the following relevant partitions. \begin{align*}\mu^{(0)}=& 9_0+6_0\in \P_3(15)\\   \mu^{(1)}=& 24_1+19_1+9_1+9_1+9_1+8_1+8_1+8_1+8_1+6_1+6_1+6_1+\\ & 5_1+5_1+5_1+ 3_1+3_1+3_1+2_1+2_1+2_1+1_1+1_1\in \Q_6(152) \\\a^{(1)}=& 9_1+9_1+9_1+8_1+8_1+8_1+6_1+6_1+6_1+5_1+5_1+5_1+\\ &  3_1+3_1+3_1+2_1+2_1+2_1\in \P(99)\\ \b^{(1)}=& 24_1+19_1+8_1+1_1+1_1\in \Q_3(53)\\ \psi^{-1}(\b^{(1)})=&19_1+8_1+8_1+8_1+8_1+1_1+1_1\in \overline\P_3(53)\\ \mu^{(0)}_{/3}= & 3_0+2_0\\ \a^{(1)}_{\backslash 3}= & 9_1+8_1+6_1+5_1+3_1+2_1\\ \end{align*} Then $\mu^{(0)}_{/3}\cup\a^{(1)}_{\backslash 3} \in \D_2(38)$ and from Example \ref{eg1}, we have $j=3$ and $$\vp^{-1}(\mu^{(0)}_{/3}\cup\a^{(1)}_{\backslash 3})=7+7+6+6+4+2\in\P(32)=  \P\left(38-\frac{3(3+1)}{2}\right).$$ Now we multiply all parts of $\vp^{-1}(\mu^{(0)}_{/3}\cup\a^{(1)}_{\backslash 3})$ by $3$ and include the  parts of $\psi^{-1}(\b^{(1)})$ with the color removed to obtain $$\xi^{-1}(\mu)=21+21+19+18+18+12+8+8+8+8+6+1+1 \in \P(149)=  \P\left(167-3\cdot 6\right).$$ 

\end{example}

\bigskip

%\noindent\textit{Department of Mathematics,
%University of Craiova, 200585 Craiova, Romania\\
%mircea.merca@profinfo.edu.ro}


\begin{thebibliography}{00}

%% \bibitem{label}
%% Text of bibliographic item

\bibitem{Andrews98} G.E. Andrews, \textit{The Theory of Partitions}, Cambridge Math. Lib., Cambridge University Press, Cambridge, 1998.

\bibitem{Andrews12} 
G.E. Andrews, M. Merca, 
The truncated pentagonal number theorem, 
\textit{J. Combin. Theory Ser. A}, \textbf{119} (2012) 1639--1643.

\bibitem{AM18} 
G.E. Andrews, M. Merca, 
Truncated Theta Series and a Problem of Guo and Zeng, 
\textit{J. Combin. Theory Ser. A}, \textbf{154} (2018) 610--619.

\bibitem{AN19} 
G.E. Andrews, D. Newman,  Partitions and the minimal excludant,  \textit{Ann. Comb.}, \textbf{23}(2) (2019) 249--254. 	

\bibitem{BM19}  C. Ballantine, M. Merca, Bisected theta series, least $r$-gaps in partitions, and polygonal numbers
\textit{Ramanujan J}, \textbf{52} (2020) 433--444.

\bibitem{BM19a}  C. Ballantine, M. Merca, On identities of Watson type, \textit{Ars Math. Contemp.},  \textbf{17} (2019), no. 1, 277--290.

\bibitem{Corteel}
S. Corteel, J. Lovejoy, 
Overpartitions, 
\textit{Trans. Amer. Math. Soc.}, \textbf{356} (2004) 1623--1635.

%\bibitem{Fraenkel}
%A.S. Fraenkel, U. Peled, 
%\textit{Harnessing the unwieldy MEX function}, 
%Games of No Chance, 63 (2015), 77--94.

\bibitem{GK06} P. J. Grabner, A. Knopfmacher,  Analysis of some new partition statistics. \textit{Ramanujan J.} \textbf{12} (2006), no. 3, 439--454.

\bibitem{G39} P.M. Grundy,  
Mathematics and games, \textit{Eureka} \textbf{2} (1939) 6--8. Reprinted in,  \textit{Eureka} \textbf{27} (1964)  9--11.

\bibitem{Hirschhorn}	
M.D. Hirschhorn, J.A. Sellers, 
Arithmetic properties of partitions with odd parts distinct, 
\textit{Ramanujan J}, \textbf{22} (2010) 273--284.




%\bibitem{Sloane} N.J.A. Sloane,  \textit{The {O}n-{L}ine {E}ncyclopedia of {I}nteger {S}equences} Published electronically at http://oeis.org

\bibitem{S35} R.P. Sprague,
\"Uber mathematische Kampfspiele, \textit{Tohoku Math. J.}, \textbf{41} (1935–36) 438--444.

\bibitem{S1882} J.J. Sylvester, F. Franklin,  
A constructive theory of partitions, arranged in three acts, an interact and an exodion, \textit{Amer. J. Math.}, \textbf{5} (1882) 251--330. 

\bibitem{W65} E.M. Wright,  
An enumerative proof of an identity of Jacobi, \textit{J. London Math. Soc.}, \textbf{40} (1965) 55--57.

\bibitem{Y15} A.J. Yee, 
A truncated Jacobi triple product theorem, 
\textit{J. Combin. Theory Ser. A}, \textbf{130} (2015), 1--14.

\end{thebibliography}
\end{document}